\def\i{\,\lrcorner\,}
\def\D{\Delta}
\def\ra{\rightarrow}
\def\.{\cdot}
\def\n{\nabla}
\def\nb{\nabla}
\def\l{\lambda}
\def\t{\tilde}
\def\beq{\begin{equation}}
\def\eeq{\end{equation}}
\def\bi{\begin{enumerate}}
\def\ei{\end{enumerate}}
\def\bea{\begin{eqnarray*}}
\def\eea{\end{eqnarray*}}
\def\ba{\begin{array}}
\def\ea{\end{array}}
\def\f{\varphi}
\def\o{\omega}
\def\e{\varepsilon}
\def\L{\Lambda}
\def\s{\sigma}
\def\r{\end{proof}}
\def \R{\mathbb{R}}
\def \RM{\mathbb{R}}
\def \N{\mathbb{N}}
\def \C{\mathbb{C}}
\def\End{{\rm End}}
\def\9{[\! (}
\def\0{)\! ]}
\def\[{\pmb{[}}
\def\]{\pmb{]}}
\def\Ric{\mathrm{Ric}}
\def\Id{\mathrm{Id}}
\def\be{\begin{equation}}
\def\ee{\end{equation}}
\def\GL{\mathrm{GL}}
\def\Gl{\mathrm{GL}}
\def\CO{\mathrm{CO}}
\def\SO{\mathrm{SO}}
\def\Sym{\mathrm{Sym}}
\def\Hol{\mathrm{Hol}}
\def\R{\mathbb{R}}
\newtheorem{ede}{Definition}[section]
\newtheorem{epr}[ede]{Proposition}
\newtheorem{prop}[ede]{Proposition}
\newtheorem{thrm}[ede]{Theorem}
\newtheorem{ath}[ede]{Theorem}
\newtheorem{elem}[ede]{Lemma}
\newtheorem{ecor}[ede]{Corollary}
\newtheorem{defi}[ede]{Definition}
\theoremstyle{definition}
\newtheorem{definition}[ede]{Remark}
\def\obs{\begin{definition}}
\def\eobs{\end{definition}}
\title{Weyl-parallel forms, conformal products and Einstein-Weyl manifolds}
\author{Florin Belgun, Andrei Moroianu}
\address{Florin Belgun\\ Institut f\"ur Mathematik\\ Humboldt-Universit{\"a}t
 zu Berlin\\ Unter den Linden 6\\ D-10099 Berlin, Germany}
 \email{belgun@math.hu-berlin.de}
\address{Andrei Moroianu \\ CMLS\\ {\'E}cole Polytechnique \\ UMR 7640
du CNRS \\ 91128 Palaiseau \\ France} \email{am@math.polytechnique.fr}
\thanks{This work was partially supported by the French-German cooperation
  project Procope no. 17825PG\\
The first named author was equally supported by the
Schwerpunktprogramm 1154 {\em Globale Differentialgeometrie} of the
DFG}
\begin{document}

\begin{abstract}

Motivated by the study of Weyl structures on conformal manifolds
admitting parallel weightless forms, we define the notion of conformal
product of conformal structures and study its basic properties. We
obtain a classification of Weyl manifolds carrying parallel forms, and
we use it to investigate the holonomy of the adapted
Weyl connection on conformal products. As an application we describe 
a new class of Einstein-Weyl manifolds of dimension 4.

 \bigskip

\noindent
2000 {\it Mathematics Subject Classification}: Primary 53A30, 53C05,
53C29.

\medskip
\noindent{\it Keywords:} Conformal products, conformally parallel
forms, Weyl structures, reducible holonomy.
\end{abstract}

\maketitle

\section{Introduction}

A {\em conformal structure} on a smooth manifold $M$ is an equivalence
class $c$ of Riemannian metrics modulo conformal rescalings, or,
equivalently, a positive definite symmetric bilinear tensor with
values in the square of the weight bundle $L$ of $M$. In contrast to
the Riemannian situation, there is no canonical connection on a
conformal manifold. Instead of the Levi-Civita connection, one can
nevertheless consider the affine space of torsion-free connections
preserving the conformal structure, called {\em Weyl structures}.

The fundamental theorem of conformal geometry states that this space
is in one-to-one correspondence with the space of connections on the
weight bundle $L$, and is thus modeled on the vector space of smooth
1-forms. It is worth noting that not every Weyl structure is (locally)
the Levi-Civita connection of a Riemannian metric in the conformal
class. This actually happens if and only if the corresponding
connection on $L$ has vanishing curvature, in which case the Weyl
structure is called {\em closed}. Every conformal problem involving
closed Weyl structures is locally of Riemannian nature, so we will be mainly
concerned with the case of non-closed Weyl structures.

Spin conformal manifolds with Weyl structures $D$ carrying parallel
spinors have been studied in \cite{bsmf}.  The basic idea, which
allows the reduction of the problem to the Riemannian case, is that
the curvature tensor of a non-closed Weyl structure is no longer
symmetric by pairs. This fact eventually shows that the spin holonomy
representation of a non-closed Weyl structure has no fixed points,
except in dimension 4, where genuine local examples do actually exist.

We consider here the analogous question for exterior forms:
Characterize (locally) those conformal manifolds $(M,c)$ which carry
an exterior form $\o$ parallel with respect to some Weyl structure
$D$.  If $D$ is closed, it is (locally) the Levi-Civita connection of
some metric $g\in c$ and $D$-parallel forms correspond to fixed points
of the Riemannian holonomy representation on the exterior bundle. By
the de Rham theorem, and the fact that the space of fixed points of a
tensor product representation is just the tensor product of the
corresponding spaces of each factor, one may assume that the holonomy
acts irreducibly on $TM$. In this case, the Berger-Simons theorem
provides the list of possible holonomy groups, so the problem reduces
to an algebraic (although far from being trivial) computation.

Back to the conformal setting, we remark that we can restrict
ourselves to the case of {\em weightless} forms since otherwise the Weyl
structure would be automatically closed. By choosing a Riemannian
metric $g\in c$, the equation $D\o=0$ becomes
\beq\label{cpf}\n^g_X\o=\theta \wedge X\i\o-X^\flat\wedge \theta
^\sharp\i\o\qquad\forall\ X\in TM,\eeq where $\n^g$ is the Levi-Civita
covariant derivative of $g$ and $\theta$ is the connection form of $D$
in the trivialization of $L$ determined by $g$. Exterior forms
satisfying \eqref{cpf} are called {\em locally conformal parallel
forms} in \cite{cabrera} and are shown to define, under some further
conditions, harmonic sections of the corresponding sphere bundles.

We start by remarking that a nowhere vanishing exterior $p$-form $\o$
($0<p<\dim(M)$) can not be parallel with respect to more than one Weyl
structure. In fact $\o$ defines a unique ``minimal" Weyl structure
$D^\o$ which is the only possible candidate for having $D\o=0$. We
next apply the Merkulov-Schwachh\"ofer classification of torsion-free
connections with irreducible holonomy \cite{SM} to the Weyl structure
$D^\o$.  A quick analysis of their tables shows that the possible
(non-generic) holonomy groups of irreducible Weyl structures are all
compact (except in dimension 4, where the solutions to our problem
turn out to correspond to Hermitian structures -- see Lemma
\ref{lck}). But, of course, a Weyl structure with compact (reduced)
holonomy is closed since its holonomy bundle defines (local)
Riemannian metrics which are tautologically $D$-parallel.

It remains to study the reducible case, which, unlike in the
Riemannian situation, is more involved. First of all, we extend the de
Rham theorem to the conformal setting.  To do this, we need to define
the notion of {\em conformal products}.  Indeed, in contrast to
Riemannian geometry, there is no canonical conformal structure on a
product $M_1\times M_2$ of two conformal manifolds $(M_1^{n_1},c_1)$
and $(M_2^{n_2},c_2)$ induced by the two conformal structures
alone. The algebraic reason is, of course, that the group
$\CO(n_1)\times \CO(n_2)\subset \GL(n_1+n_2,\R)$ is not included in
$\CO(n_1+n_2)$.

On the other hand, a property characterizing the Riemannian product
$(M,g)$ of two Riemannian manifolds $(M_1,g_1)$ and $(M_2,g_2)$ is the
existence of two {\em complementary orthogonal} Riemannian submersions
$p_i:(M,g)\to (M_i,g_i)$ (here, {\em complementary} means that $TM$ is
the direct sum of the kernels of $dp_i$ and {\em orthogonal} means
that these kernels are orthogonal at each point). Generalizing this to
conformal geometry, a conformal structure on the manifold
$M:=M_1\times M_2$ is said to be {\em a conformal product} of
$(M_1,c_1)$ and $(M_2,c_2)$ if the canonical submersions $p_1:M\ra
M_1$ and $p_2:M\ra M_2$ are {\em orthogonal} conformal submersions.

In Section 4 we show that every conformal product carries a unique
{\em adapted} reducible Weyl structure $D$ preserving the two factors,
and conversely, every reducible Weyl structure induces a local
conformal product structure.

The similarities with the Riemannian case stop here, however, since
the factors of a conformal product do not carry canonical Weyl
structures (in fact the restrictions of the adapted Weyl structure $D$
to each slice $\{x_1\}\times M_2$ or $M_1\times \{x_2\}$ of the
conformal product $M_1\times M_2$ depends on $x_1$ and $x_2$), so it
is not possible to interpret the space of $D$-parallel forms on
$M_1\times M_2$ in terms of the two factors.

On the other hand, the lack of symmetry of the curvature tensor of $D$
mentioned above, allows us to show (in Section 5) that every parallel
form on a conformal product with non-closed adapted Weyl structure is
of pure type and eventually has to be the weightless volume form of
one of the factors, exception made of the 2-dimensional conformal products 
and of some conformal products of dimension 4 (which are described 
in detail in Section 6).

As an application, we present an explicit construction of new families of
Einstein-Weyl structures in dimension 4, using conformal products of
surfaces by means of {\em bi-harmonic} functions. These reduce to the
well-known examples of hyper-Hermitian surfaces constructed by Joyce
\cite{j} in the particular case where the bi-harmonic function is the real
part of a holomorphic function on $\C^2$, examples that fully cover
the cases of conformal {\em multi-products} in dimension larger than
2, {\em i.e.}, of a Weyl structure leaving invariant more than one pair of
orthogonal proper subspaces. 

Note that the conformal product Ansatz already occurred (although
without being named) in the study of 3-dimensional Einstein-Weyl
structures \cite{toda}. The Einstein-Weyl structure on this
  conformal product is, however, different from the adapted one. 

\smallskip

A similar question about holonomy on $n$-dimensional conformal
manifolds $(M,c)$ was studied by S. Armstrong in \cite{arm}. He
considers the holonomy of the {\em 
  canonical Cartan connection}, which is not an affine connection on
$TM$ but a (uniquely defined) linear connection on an $n+2$-rank
vector bundle of Lorentzian signature, and classifies the occurring
holonomy groups. A decomposition theorem is also given in this
context, if the Cartan connection leaves a $k$-dimensional subspace
invariant, for $2\le k\le n$: Summarized in the terms of our present
paper, $(M,c)$ turns out to be, in this case, a closed conformal
product with Einstein factors (plus some relation between the scalar
curvatures).  

We see therefore that despite the obvious geometric particularity
of a general conformal product structure, no restriction on the
holonomy of the Cartan connection (also called {\em conformal
  holonomy}) is implied. This fact brings us to the first
of the following open questions about conformal products: 
\bi
\item Is there any invariant characterization of the underlying
  conformal structure of a conformal product? 
\item Can the multiple conformal products ({\em i.e.}, admitting more than
  one pair of orthogonal conformal submersions) be characterized
  geometrically? 
\item Which are the possible reduced holonomies of a Weyl structure?
\ei

On the other hand, as the results of \cite{toda} and Section 6
suggest, the conformal product Ansatz is expected to have further
applications.

\section{Preliminaries}

Let $M^n$ be a manifold and let $P$ denote the principal bundle of
frames. The {\em weight bundle} of $M$ is the real line bundle $L$
associated to $P$ via the representation $|\det| ^{1/n}$ of
$\Gl(n,\RM)$. More generally one can define the $k$-weight bundle
$L^k$ for every $k\in \RM$, associated to $P$ via the representation
$|\det| ^{k/n}$. Obviously $L^k\otimes L^p\cong L^{k+p}$ and
$L^{-n}\cong |\Lambda ^n(T^*M)|=\delta M$, which is the bundle of {\em
densities} on $M$, a trivial line bundle (associated to $P$ via the
representation $|\det|^{-1}$) even if $M$ is not orientable. Positive
densities are geometrically meaningful as ``absolute values'' of
volume forms and positive global densities induce Lebesgue-like
measures on $M$ (like the well-known {\em Riemannian volume element}
of a Riemannian manifold).

As the weight bundles are powers of $\delta M$, the notion of
positivity is still well-defined; more precisely, a section of $L^k$
is positive if it takes values in $P\times _{|\det|
^{k/n}}\RM^+\subset L^k$. A {\em weighted tensor} on $M$ is a section
of $TM^{\otimes a}\otimes T^*M^{\otimes b} \otimes L^k$ for some
$a,b\in\N$ and $k\in \RM$. Its weight is by definition the real number
$a-b+k$.

\begin{defi}\label{co}
A {\em conformal structure} on $M$ is a symmetric positive definite
bilinear form $c$ on $TM\otimes L^{-1}$, or, equivalently, a symmetric
positive definite bilinear form on $TM$ with values in $L^2$.
\end{defi}

A conformal structure on $M$ can also be seen as a reduction
$P(\CO_n)$ of $P$ to the conformal group $\CO_n\cong\RM^+\times{\rm
O}_n\subset \GL(n,\RM)$.

We denote by $\L^k_0M:=\L^kM\otimes L^k$ the bundle of weightless
exterior forms of degree $k$.  The conformal structure defines an
isomorphism between weightless vectors and $1$-forms: $TM\otimes
L^{-1}\cong \L^1_0M$. The scalar product $c$ on $\L^1M\otimes L$
induces a scalar product, also denoted by $c$, on the bundles
$\L^k_0M$. Moreover, the exterior product maps $\L^k_0M\otimes
\L^{n-k}_0M$ onto $\L^n_0M\cong\RM$, thus defining the Hodge operator
$*:\L^k_0M\to \L^{n-k}_0M$ by
\beq\label{ho}\o\wedge\*\sigma=c(\o,\sigma),\qquad\forall \o,\sigma\in
\L^k_0M.\eeq

There is a one-to-one correspondence between positive sections $l$ of
$L$ and Riemannian metrics on $M$, given by the formula
\beq\label{cg}c(X,Y)=g(X,Y)l^2,\qquad\forall\ X,Y\in TM.\eeq

\begin{defi}\label{ws}
A {\em Weyl structure} on a conformal manifold is a torsion-free
connection on $P(\CO_n)$.
\end{defi}

By (\ref{ho}), the Hodge operator is parallel with respect to every
Weyl structure.

\begin{ath}\label{thw} {\em (Fundamental theorem of Weyl geometry)} 
There is a one-to-one correspondence between Weyl structures and
covariant derivatives on $L$.
\end{ath}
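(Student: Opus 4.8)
The plan is to establish the bijection in both directions by passing to an auxiliary metric. First I would fix a positive section $l$ of $L$, hence a Riemannian metric $g$ via \eqref{cg}, and use it to trivialize $L$. Given a Weyl structure $D$, the torsion-free condition and the fact that $D$ preserves the conformal class mean that $Dg = -2\theta\otimes g$ for a unique $1$-form $\theta$ (the connection form of the induced covariant derivative on $L$ in the trivialization determined by $l$). Conversely, given a covariant derivative on $L$, represented by $\theta$ in the trivialization, I would \emph{define} a connection by the explicit Koszul-type formula
\beq\label{weylformula}
D_XY=\n^g_XY+\theta(X)Y+\theta(Y)X-g(X,Y)\theta^\sharp,
\eeq
and check directly that it is torsion-free (the added terms are symmetric in $X,Y$), that it preserves the conformal structure (a short computation shows $Dc=0$ using $Dg=-2\theta\otimes g$ and $Dl^2=2\theta\otimes l^2$), and that the covariant derivative it induces on $L$ is the prescribed one. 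The two constructions are visibly mutually inverse.

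The key step — and the only one requiring care — is the \emph{well-definedness} of the correspondence, i.e. its independence of the choice of $l$. If $\tilde l=e^f l$ is another positive section, the metric rescales as $\tilde g=e^{2f}g$, the Levi-Civita connections are related by the standard conformal change formula $\tilde\n^g_XY=\n^g_XY+df(X)Y+df(Y)X-g(X,Y)\grad^g f$, and the connection form of a fixed covariant derivative on $L$ transforms as $\tilde\theta=\theta-df$. Substituting both into \eqref{weylformula} with tildes, the $df$-terms cancel exactly against the change in $\n^g$, so $D$ is the same connection. This shows \eqref{weylformula} descends to a well-defined map from covariant derivatives on $L$ to Weyl structures, and the inverse map ($D\mapsto$ induced covariant derivative on $L$) is manifestly conformally invariant.

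The main obstacle is mostly bookkeeping: one must be scrupulous about the sign and normalization conventions relating $\theta$, the trivialization of $L$, and the induced covariant derivative on $L$ (the factor arising from the weight $-n$ of the density bundle, or equivalently from $|\det|^{1/n}$), so that the round trips genuinely compose to the identity. Once the conventions are fixed, injectivity is immediate (two Weyl structures inducing the same $\theta$ in one — hence every — trivialization are equal by \eqref{weylformula}) and surjectivity follows since every $\theta$ arises from some covariant derivative on $L$. I would close by remarking that affineness of the space of Weyl structures over the space of $1$-forms is a free corollary, since the difference of the $\theta$'s is a globally defined $1$-form independent of trivialization.
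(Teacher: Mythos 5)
Your argument is correct, but it takes a genuinely different route from the paper. The paper never chooses a gauge: it observes that torsion-freeness and $Dc=0$ are together equivalent to the weighted Koszul identity \eqref{Koszul}, in which the terms $D_X(c(Y,Z))$ are covariant derivatives of sections of $L^2$; hence a covariant derivative on $L$ determines $c(D_XY,Z)$ for all $Z$, and therefore $D_XY$, with no trivialization and no well-definedness check needed. You instead trivialize $L$ by a positive section $l$, pass to the associated metric $g$, write the candidate connection explicitly as $\n^g_XY+\theta(X)Y+\theta(Y)X-g(X,Y)\theta^\sharp$ (which is exactly the formula \eqref{tilde}--\eqref{dd} that the paper derives \emph{after} the theorem, as the difference of two Weyl structures), and then verify that the construction is independent of the choice of $l$ via the conformal change of the Levi-Civita connection. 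Both proofs are complete; yours is more computational but yields the explicit local formula as a byproduct, while the paper's is shorter and manifestly gauge-invariant, which is why it can dispense with the compatibility check that forms the bulk of your argument. One small caution inside the bookkeeping you already flag: with the paper's convention $c(X,Y)=g(X,Y)l^2$ from \eqref{cg}, the rescaling $\tilde l=e^fl$ corresponds to $\tilde g=e^{-2f}g$, not $e^{2f}g$, and correspondingly $\tilde\theta=\theta+df$; the $df$-terms still cancel exactly as you describe, so this is a sign convention to fix rather than a gap.
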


\begin{proof} Every connection on $P$ induces a covariant derivative $D$ on
  $TM$ and on $L$. The connection is a Weyl structure if and only if
  $D$ satisfies $D_XY-D_YX=[X,Y]$ and $D_Xc=0$ for all vector fields
  $X,\ Y$ on $M$. Like in the Riemannian situation, these two
  relations are equivalent to the Koszul formula
  \be\begin{split}\label{Koszul}
  2c(D_XY,Z)=&D_X(c(Y,Z))+D_Y(c(X,Z))-D_Z(c(X,Y))\cr
  &+c([X,Y],Z)+c([Z,X],Y)+c([Z,Y],X),\end{split}\ee for all vector
  fields $X,\ Y,\ Z$ on $M$.  We thus see that every covariant
  derivative $D$ on $L$ induces by the formula above a covariant
  derivative on $TM$, and thus on $P$, which is clearly torsion-free
  and satisfies $Dc=0$.  \r

A Weyl structure $D$ is called {\em closed} (resp. {\em exact}) if $L$
carries a local (resp. global) $D$-parallel section.  As Riemannian
metrics in the conformal class $c$ correspond to positive sections of
$L$, it follows immediately that $D$ is closed (resp. exact) if and
only if $D$ is locally (resp. globally) the Levi-Civita connection of
a metric $g\in c$.

If $D$ and $D'$ are covariant derivatives on $L$, their difference is
determined by a 1-form $\tau$: $D_Xl-D'_Xl=\tau(X)l$ for all $X\in TM$
and sections $l$ of $L$. From (\ref{Koszul}) we easily obtain
$$D_XY-D'_XY=\tau(X)Y+\tau(Y)X-c(X,Y)\tau,$$ for all vector fields
$X,Y$. Here we note that the last term on the right hand side, which
is a section of $L^2\otimes \Lambda ^1M$, is identified with a vector
field using the conformal structure.

For every $X\in TM$ we define the endomorphism $\tilde\tau_X$ on $TM$
and on $L$ by \beq\label{tilde}
\tilde\tau_X(Y):=\tau(X)Y+\tau(Y)X-c(X,Y)\tau,\qquad
\tilde\tau_X(l):=\tau(X)l, \eeq and extend it as a derivation to all
weighted tensor bundles (in particular $\tilde\tau_X$ is the scalar
multiplication by $k\tau(X)$ on $L^k$). We then have \beq\label{dd}
D_X-D'_X=\tilde\tau_X, \eeq on all weighted bundles.

Consider now a metric $g$ in the conformal class $c$, or equivalently,
a positive section $l$ of $L$ trivializing $L$.  Let $D$ be a Weyl
structure on $(M,c)$ and let $\theta\in\Omega ^1(M,\RM)$ be the
connection form of $D$ on $L$ with respect to the gauge $l$:
\begin{equation}\label{cg1}
D_Xl=\theta(X)l,\qquad\forall\ X\in TM.
\end{equation} 
The 1-form $\theta$ is called the {\em Lee form} of $D$ with respect
to $g$. The curvature of $D$ on $L$ is the two-form $F:=d\theta$
called the {\em Faraday form}.

Let $R^D$ denote the curvature tensor of a Weyl structure $D$, defined
as usual for vector fields $X$, $Y$ and $Z$ by
$R^D_{X,Y}Z=[D_X,D_Y]Z-D_{[X,Y]}Z$. We also view $R^D$ as a section of
$T^*M^{\otimes 4}\otimes L^{2}$ by the formula
$R^D(X,Y,Z,T)=c(R^D_{X,Y}Z,T)$.  In contrast to the Riemannian case,
$R^D$ is not symmetric by pairs, and a straightforward calculation
shows that the symmetry failure is measured by the Faraday form $F$ of
$D$: \be\label{fa}\ba{ccl}R^D(X,Y,Z,T)-R^D(Z,T,X,Y)&=& (F(X)\wedge
Y-F(Y)\wedge X)(Z,T)\\ &&+F(X,Y)c(Z,T)-F(Z,T)c(X,Y),\ea\ee where, for
a (weighted) endomorphism $A\in \End(TM)\otimes L^k$, and
vectors $X,Y,Z,T$:
$$(A(X)\wedge Y)(Z,T):=c(A(X),Z)c(Y,T)-c(A(X),T)c(Y,Z).$$

\section{Conformal submersions}

If $(M^m,c)$ and $(N^n,c')$ are conformal manifolds, a {\em conformal
map} is a smooth map $f:M\to N$ such that
$$df|_{(\ker df)^\perp}:(\ker df)^\perp\ra df(T_xM)\subset T_{f(x)}N$$
is a conformal isomorphism for every $x\in M$. A conformal map which
is a submersion is called a conformal submersion.

\begin{elem}\label{cs1}
Let $(M^m,c)$ be a conformal manifold and let $p:M\to N$ be a
submersion onto a manifold $N^n$. Then the pull-back of the weight
bundle of $N$ is canonically isomorphic to the weight bundle of $M$.
\end{elem}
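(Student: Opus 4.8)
The plan is to use the correspondence (\ref{cg}) between positive sections of the weight bundle and metrics in the conformal class. Write $L$ and $L_N$ for the weight bundles of $M$ and $N$.

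First I would fix a metric $g\in c$, let $l\in\Gamma(L)$ be the corresponding positive section (so $c=g\,l^2$), set $V:=\ker dp$, and take $H$ to be the $g$-orthogonal complement of $V$; since $p$ is a submersion, $dp_x|_{H_x}:H_x\to T_{p(x)}N$ is a linear isomorphism for every $x\in M$. Transporting the Euclidean structure $g|_{H_x}$ through this isomorphism equips $T_{p(x)}N$ with a metric whose Riemannian volume density is a positive element of $(\delta N)_{p(x)}=\big((p^*L_N)^{-n}\big)_x$; taking its canonical $(-1/n)$-th power would then give a positive section $\bar l\in\Gamma(p^*L_N)$ over $M$.

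The crucial point to establish is that the element $l\otimes\bar l^{-1}\in\Gamma\big(L\otimes(p^*L_N)^{-1}\big)$ does not depend on the choice of $g$. To see this I would replace $g$ by $g'=e^{2f}g$: then $l'=e^{-f}l$ by (\ref{cg}), the $g'$-orthogonal complement of $V$ is again $H$, and the transported metric on $T_{p(x)}N$ is scaled by $e^{2f(x)}$, so its volume density is scaled by $e^{nf(x)}$ and hence $\bar l'=e^{-f}\bar l$. Therefore $l'\otimes(\bar l')^{-1}=l\otimes\bar l^{-1}$, so this expression defines a globally well-defined nowhere-vanishing section of $L\otimes(p^*L_N)^{-1}$, i.e. a canonical isomorphism $p^*L_N\cong L$.

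I expect the only step requiring real care to be the conformal-invariance computation just outlined (together with the observation that $\bar l$ is naturally a section of $p^*L_N$ over $M$, not a basic section on $N$, which is exactly what the statement asserts); constructing $l$ and $\bar l$ and drawing the conclusion are routine. One could also proceed invariantly, avoiding the choice of $g$: restricting $c$ to $V$ makes $V\otimes L^{-1}$ into a Riemannian vector bundle, and its volume form, combined with the isomorphism $|\Lambda^m TM|\cong|\Lambda^{m-n}V|\otimes p^*|\Lambda^n TN|$ induced by $0\to V\to TM\to p^*TN\to 0$, yields $L^{\otimes n}\cong(p^*L_N)^{\otimes n}$; one then extracts the canonical $n$-th root, which is legitimate precisely because the weight bundles carry distinguished positive rays.
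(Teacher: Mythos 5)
Your proof is correct and rests on the same geometric input as the paper's: the conformal structure restricted to the horizontal complement of $\ker dp$ lets one measure $n$-dimensional volume elements pulled back from $N$, which identifies the pulled-back density bundle of $N$ with $L^{-n}$ and hence, by extracting roots along the distinguished positive rays, gives $p^*L_N\cong L$. The only difference is presentational: the paper defines the map invariantly (sending a density $|\e_1\wedge\dots\wedge\e_n|$ on $N$ to the conformal norm of $p^*\e_1\wedge\dots\wedge p^*\e_n$ and checking basis-independence), whereas you fix a gauge $g\in c$ and verify conformal invariance of $l\otimes\bar l^{-1}$ under $g\mapsto e^{2f}g$ -- an equivalent computation.
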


\begin{proof}
Let $L'$ and $L$ denote the weight bundles of $N$ and $M$
respectively. We decompose $TM=\ker dp\oplus (\ker dp)^\perp$ into the
vertical and horizontal distributions.  We will show that
$p^*(L')^{-n}$ is canonically isomorphic to $L^{-n}$.  Every element
$(l')^{-n}$ of the fiber of $(L')^{-n}\simeq \delta N$ at $y\in N$ can
be represented by the density $(l')^{-n}:=|\e_1\wedge...\wedge \e_n|$
where $\{\e_i\}$ is some basis of $T^*N_y$. For every $x\in p^{-1}(y)$
we then associate to the element $p^*(l')^{-n}_y$ the conformal norm
of $({p^*\e_1})_x\wedge...\wedge ({p^*\e_n})_x$, which is an element
of $L^{-n}_x$. It is straightforward to check that this isomorphism
does not depend on the choice of the basis.

\r

\obs \label{32} Notice that this result only holds for $n\ge 1$ since
  we need at least one non-vanishing 1-form in order to produce a
  weight on $N$.  \eobs

\begin{elem}\label{cs2}
Let $p:M\to N$ be a submersion with connected fibers from a conformal
manifold $(M^m,c)$ onto a manifold $N^n$. Assume that the horizontal
distribution $H:=(\ker df)^\perp$ is parallel with respect to some
Weyl structure $D$. Then the pull-back to $M$ of every covariant
weighted tensor on $N$ is $D$-parallel in the vertical directions.

Conversely, a covariant weighted tensor on $M$ which is horizontal and
$D$-parallel in the vertical directions, is the pull-back of a
covariant weighted tensor on $N$.
\end{elem}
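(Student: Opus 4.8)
The plan is to reduce both assertions to the action of $D$, in vertical directions, on three kinds of generators --- pulled-back functions, pulled-back $1$-forms, and local sections of the weight bundle --- and then to pass to arbitrary covariant weighted tensors using that $D_V$ is a derivation of the weighted tensor bundles and that pull-back commutes with tensor products. I would first record two preliminary facts. Since $D$ preserves the conformal structure and, by hypothesis, the horizontal distribution $H$, it also preserves the vertical distribution $\ker dp=H^\perp$: if $Y$ is vertical, $X$ horizontal and $Z$ arbitrary, differentiating $c(Y,X)=0$ gives $c(D_ZY,X)=-c(Y,D_ZX)=0$ because $D_ZX$ is horizontal, so $D_ZY$ is vertical. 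The key geometric input is then that $D_V\tilde X=0$ whenever $V$ is a vertical vector field and $\tilde X$ is the horizontal lift to $M$ of a vector field $X$ on $N$: from torsion-freeness, $D_V\tilde X-D_{\tilde X}V=[V,\tilde X]$; the bracket is $p$-related to $[0,X]=0$, hence vertical; $D_{\tilde X}V$ is vertical by the first fact; and $D_V\tilde X$ is horizontal since $H$ is $D$-parallel; being both horizontal and vertical, it must vanish.

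For the first statement I would argue on generators. On a function $f$ on $N$ one has $D_V(f\circ p)=dp(V)(f)=0$. On a $1$-form $\alpha$ on $N$, the pull-back $p^*\alpha$ annihilates vertical vectors, so the tensor $D_V(p^*\alpha)$ evaluated on a horizontal lift $\tilde X$ equals $V(\alpha(X)\circ p)-(p^*\alpha)(D_V\tilde X)=0$, and evaluated on a vertical field $V'$ equals $-(p^*\alpha)(D_VV')=0$ because $D_VV'$ is vertical; as horizontal lifts and vertical vectors span $TM$ pointwise, $D_V(p^*\alpha)=0$. For the weight bundle: since any section of $L'$ is a function times a fixed nowhere-vanishing local section and $D_V$ annihilates pulled-back functions, it is enough to treat one such section. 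Taking a local coframe $\alpha_1,\dots,\alpha_n$ on $N$, the density $|\alpha_1\wedge\dots\wedge\alpha_n|$ trivializes $(L')^{-n}$, and under the canonical isomorphism of Lemma \ref{cs1} its pull-back corresponds to the conformal norm of the $n$-form $p^*\alpha_1\wedge\dots\wedge p^*\alpha_n$ on $M$; that $n$-form is $D_V$-parallel (each $p^*\alpha_i$ is), hence so is its conformal norm (since $D_Vc=0$), hence so is the associated nowhere-vanishing section of $L$. A general covariant weighted tensor on $N$ is locally a sum of tensor products of such generators, so $D_V(p^*\sigma)=0$ follows from the derivation property of $D_V$ and the compatibility of pull-back with tensor products.

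For the converse, I would first note that $D_V$ preserves horizontality: if a covariant weighted tensor $\sigma$ vanishes whenever one argument is vertical, then so does $D_V\sigma$, since $D_V$ of a vertical field is vertical. Via the isomorphism $H\cong p^*TN$ induced by $dp$, together with Lemma \ref{cs1} for the weight factor, the bundle of horizontal covariant weighted tensors of a fixed type on $M$ is identified with the pull-back $p^*F$ of the corresponding weighted tensor bundle $F$ of $N$. Over the preimage of a small open set in $N$, pick a local frame $e_1,\dots,e_r$ of $F$ and write the given tensor as $\sigma=\sum_i f^i\,p^*e_i$; by the first statement each $p^*e_i$ is $D_V$-parallel, so $D_V\sigma=\sum_i (Vf^i)\,p^*e_i$, and $\sigma$ is $D$-parallel in vertical directions iff every $f^i$ is constant on the (connected) fibers of $p$. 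In that case $f^i=\bar f^i\circ p$ for smooth functions $\bar f^i$ on $N$, so $\sigma=p^*(\sum_i\bar f^i e_i)$ locally; these local primitives agree on overlaps because $p$ is surjective, and thus define a global covariant weighted tensor on $N$ whose pull-back is $\sigma$.

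The main obstacle is the identity $D_V\tilde X=0$, or rather the correct bookkeeping of which of $[V,\tilde X]$, $D_V\tilde X$ and $D_{\tilde X}V$ is horizontal and which is vertical; once this splitting is settled, everything else is a routine derivation-and-frame computation. I would also take care to invoke Lemma \ref{cs1} in its canonical, coframe-independent form, so that the weight-bundle generator above is unambiguously defined.
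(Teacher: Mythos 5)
Your proof is correct and follows essentially the same route as the paper: the key identity $D_V\tilde X=0$ obtained from torsion-freeness and the horizontal/vertical splitting, the computation for pulled-back $1$-forms, the weight bundle handled via the isomorphism of Lemma \ref{cs1}, and the extension to all covariant weighted tensors by the derivation property. Your converse, phrased via a local frame of pulled-back sections whose coefficients descend along the connected fibers, is only a cosmetic variant of the paper's pointwise definition of $\bar Q$ and verification of its well-definedness.
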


\begin{proof} 
We first show that $D_V (p^*\o)=0$ for all 1-forms $\o$ on $N$. If $W$
is a vertical vector field, $D_VW$ is again vertical, so
$0=p^*\o(D_VW)=(D_V(p^*\o)) (W)$.  Next, if $X$ is another vector
field on $N$ and $\t X$ denotes its horizontal lift, $p^*\o(\t X)$ is
constant on each fiber of $p$, so $0=V.(p^*\o(\t X))= (D_V(p^*\o)) (\t
X)+(p^*\o)(D_V\t X)$. On the other hand $D_V\t X=D_{\t X}V+[V,\t X]$
vanishes (because $D_{\tilde X}V$ and $[V,\tilde X]$ are vertical, and
$D_V\tilde X$ is horizontal), so finally $D_V (p^*\o)=0$.

Lemma \ref{cs1} shows that the pull-back of the weight bundle $L'$ of
$N$ is isomorphic to the weight bundle $L$ of $M$.  Moreover, the
calculation above shows that \be\label{dl}D_V(p^*l')=0\ee for every
section $l'$ of $L'$ and vertical vector field $V\in \ker df$. Since
the $1$-forms and the sections of $L'$ generate the whole algebra of
covariant weighted tensors on $N$, this proves the first part of the
lemma.

For the converse part, we first show that a section $l$ of $L$ which
is parallel in vertical directions is the pull-back of a section of
$L'\ra N$. Indeed, if we take any global nowhere vanishing section
$l'$ of $L'$, one can write $l=fp^*l'$ for some function $f$ which by
\eqref{dl} is constant in vertical directions, {\em i.e.} $f$ is the
pull-back of some function $f'$ on $N$, so finally $l=p^*(f'l')$.

Let now $Q:TM^{\otimes k}\ra L^r$ be a covariant weighted tensor field
on $M$ such that $Q(X_1,\dots,X_k)=0$ whenever one of the $X_i$ is
vertical, and which is $D$-parallel in the vertical directions. For
every $y\in N$ and $x\in p^{-1}(y)\subset M$ we define
\be\label{q}\bar Q_y(Y_1,\dots,Y_k):=Q_x(\tilde Y_1,\dots,\tilde Y_k),
\ee where $Y_1,\dots,Y_k\in T_yN$ are arbitrary vectors, and $\tilde
Y_1,\dots,\tilde Y_k\in T_xM$ are their horizontal lifts. This
definition makes sense thanks to the identification of $p^*L'\simeq L$
of Lemma \ref{cs1}. In order to show that it is independent of the
choice of $x\in p^{-1}(y)$, we need to check that the right hand side
is a weight which is $D$-parallel in vertical directions. This follows
from the relations $D_V\tilde Y_i=0$, proved above, and the hypothesis
$D_VQ=0$.

\r

\begin{ecor}\label{cs3}
Under the hypothesis of the previous lemma, there exists a unique
conformal structure on $N$ turning $p$ into a conformal submersion.
\end{ecor}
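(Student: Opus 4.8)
The plan is to obtain the conformal structure on $N$ by restricting $c$ to the horizontal distribution and pushing it down via the converse part of Lemma \ref{cs2}. Write $\pr_H\colon TM\to H$ for the orthogonal projection onto $H=(\ker dp)^\perp$ and define a covariant weighted $2$-tensor on $M$ by $Q(X,Y):=c(\pr_H X,\pr_H Y)$, which is a (symmetric) section of $T^*M^{\otimes 2}\otimes L^2$. By construction $Q$ is horizontal, i.e.\ it vanishes as soon as one of its arguments is vertical. Moreover $Q$ is $D$-parallel: since $D$ is a Weyl structure we have $Dc=0$, and since $H$ is $D$-parallel by hypothesis while $Dc=0$, its $c$-orthogonal complement $\ker dp$ is $D$-parallel as well, hence so is $\pr_H$; therefore $DQ=0$, and in particular $Q$ is $D$-parallel in the vertical directions.

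Next I would apply the converse statement of Lemma \ref{cs2} to $Q$, obtaining a covariant weighted $2$-tensor $\bar c$ on $N$ with $p^*\bar c=Q$, the pull-back being taken with respect to the canonical identification $p^*L'\cong L$ of Lemma \ref{cs1} (here $L'$ is the weight bundle of $N$, which is defined because $n\ge1$, cf.\ Remark \ref{32}). Explicitly, $\bar c_y(Y_1,Y_2)=c_x(\tilde Y_1,\tilde Y_2)$ for any $x\in p^{-1}(y)$, where $\tilde Y_i$ denotes the horizontal lift of $Y_i\in T_yN$. It remains to check that $\bar c$ is a conformal structure on $N$, i.e.\ a positive definite section of $\Sym^2 T^*N\otimes (L')^2$: symmetry is inherited from $Q$, and positivity follows at once because for each $x\in p^{-1}(y)$ the differential $dp_x$ restricts to a linear isomorphism $H_x\to T_yN$ (as $p$ is a submersion and $H_x$ is a complement of $\ker dp_x$), while $c_x$ is positive definite on $H_x$.

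Finally I would verify the two assertions of the corollary. That $p$ is a conformal submersion for $\bar c$ is exactly the content of the defining formula for $\bar c$: it says that $dp_x$ maps $(\ker dp_x)^\perp=H_x$ conformally onto $T_{p(x)}N$, namely $(dp_x|_{H_x})^*\bar c_{p(x)}=c_x|_{H_x}$ under the weight identification. For uniqueness, if $c''$ is another conformal structure on $N$ for which $p$ is a conformal submersion, then by definition of a conformal submersion $(dp_x|_{H_x})^*c''_{p(x)}=c_x|_{H_x}$ (again under the identification $p^*L'\cong L$), and since $dp_x|_{H_x}$ is an isomorphism this forces $c''_{p(x)}=\bar c_{p(x)}$ for every $x$; as $p$ is onto, $c''=\bar c$. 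The only point requiring genuine care is the bookkeeping of the weight bundles, namely that this restriction-and-pushforward of $c$ is compatible with the isomorphism $p^*(L')^{-n}\cong L^{-n}$ constructed in Lemma \ref{cs1} from conformal norms of pulled-back top forms; once that identification is fixed, the argument needs no computation, everything reducing to the fact that $dp_x|_{H_x}$ is a conformal isomorphism between $(H_x,c_x)$ and $(T_{p(x)}N,\bar c_{p(x)})$.
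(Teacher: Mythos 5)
Your proof is correct and follows essentially the same route as the paper: restrict $c$ to the horizontal distribution, observe that the resulting horizontal tensor is $D$-parallel (in particular in vertical directions), and push it down to $N$ via the converse part of Lemma \ref{cs2}. You merely spell out the positivity, the conformal-submersion property and the uniqueness, which the paper leaves as ``clear''.
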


\begin{proof} 
Let $c_1$ denote the restriction of the conformal structure on $M$ to
the horizontal distribution. Since the horizontal distribution is
$D$-parallel, the same holds for $c_1$. Lemma \ref{cs2} thus shows
that $c_1$ is the pull-back of some weighted tensor $c'$ on $N$, which
is clearly a conformal structure on $N$.

\r

\obs We can extend now the result of the Lemma \ref{cs2} to {\em any}
weighted tensor on $N$, respectively on $M$, because on a conformal
manifold every tensor can be seen as a covariant one (with the
appropriate weight).  \eobs

\section{Conformal products}

Let $(M_1,c_1)$ and $(M_2,c_2)$ be two conformal manifolds,
$M=M_1\times M_2$ and let $p_i:M\to M_i$ be the canonical submersions.

\begin{defi}\label{pc1}
A conformal structure on the manifold $M:=M_1\times M_2$ is said to be
{\em a conformal product} of $(M_1,c_1)$ and $(M_2,c_2)$ if and only
if the canonical submersions $p_1:M\ra M_1$ and $p_2:M\ra M_2$ are
{\em orthogonal} conformal submersions.
\end{defi}

For later use, we describe the construction of a conformal product
structure in terms of weight bundles:

\begin{prop}\label{constr} Given two conformal manifolds $(M_1^{n_1},c_1)$,
  resp. $(M_2^{n_2},c_2)$, there is a one-to-one correspondence
  between the set of {\em conformal product} structures on
  $M:=M_1\times M_2$ and the set of pairs of bundle homomorphisms
  $P_1:L\ra L_1$ and $P_2:L\ra L_2$, whose restrictions to each fiber
  are isomorphisms, such that the following diagram is commutative
  (here $L,L_1,L_2$ denote the weight bundles of $M,\ M_1$, and $M_2$
  respectively):
\begin{equation}\label{semiromb}
\begin{diagram}[size=20pt,nohug]
   & & & & L & & & & \\ & & &\ldTo^{P_1}(4,4)&\dTo&\rdTo^{P_2}(4,4)& &
   & \\ & & & & M & & & & \\ & & & \ldTo^{p_1} & & \rdTo^{p_2} & & &
   \\ L_1&\rTo&M_1& & & &M_2&\lTo&L_2 \\
\end{diagram}
\end{equation}
\end{prop}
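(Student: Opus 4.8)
The plan is to establish the correspondence in both directions, using the pull-back isomorphism of weight bundles from Lemma \ref{cs1} as the central tool.

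\textbf{From a conformal product to the pair $(P_1,P_2)$.} Suppose $c$ is a conformal product structure on $M=M_1\times M_2$, so that $p_1$ and $p_2$ are orthogonal conformal submersions. Applying Lemma \ref{cs1} to each submersion $p_i:M\ra M_i$ (note $n_i\ge 1$, so Remark \ref{32} is satisfied), we obtain canonical isomorphisms $p_i^*L_i\cong L$. Composing the inverse of this isomorphism with the tautological bundle map $p_i^*L_i\ra L_i$ covering $p_i$ yields a bundle homomorphism $P_i:L\ra L_i$ whose restriction to each fiber is an isomorphism and which covers $p_i$; commutativity of the diagram \eqref{semiromb} is then immediate from the construction. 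The only point requiring a word is that $P_i$ is well-defined independently of any choices, which is exactly what the last sentence of the proof of Lemma \ref{cs1} asserts.

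\textbf{From the pair $(P_1,P_2)$ to a conformal product.} Conversely, given fiberwise-isomorphic bundle maps $P_i:L\ra L_i$ making \eqref{semiromb} commute, I would reconstruct $c$ as follows. The vertical distribution for $p_1$ is $V_1=\ker dp_1=TM_2$ (pulled back), and similarly $V_2=\ker dp_2=TM_1$; these are complementary, $TM=V_1\oplus V_2$. On $V_2=p_1^*TM_1$ we have the conformal structure $p_1^*c_1$, which takes values in $p_1^*L_1^2$; using the fiberwise isomorphism $P_1$ (more precisely its square $L^2\ra L_1^2$, or rather the induced identification $p_1^*L_1\cong L$ dual to $P_1$) we transport it to an $L^2$-valued positive symmetric form on $V_2$. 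Symmetrically, $P_2$ lets us transport $p_2^*c_2$ to an $L^2$-valued positive symmetric form on $V_1$. Declaring $V_1$ and $V_2$ to be orthogonal and taking the direct sum of these two forms defines a symmetric positive definite $L^2$-valued bilinear form $c$ on $TM$, i.e. a conformal structure. By construction the splitting $TM=V_1\oplus V_2$ is $c$-orthogonal, each $dp_i$ restricted to the horizontal space $V_i^\perp=V_{3-i}$ is a conformal isomorphism onto $T M_i$ (precisely because we transported $c_i$ via the fiberwise-iso $P_i$), and $L$ is indeed the weight bundle of $(M,c)$ because $P_i$ identifies it fiberwise with $L_i$ in the canonical way of Lemma \ref{cs1}. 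Hence $p_1,p_2$ are orthogonal conformal submersions and $c$ is a conformal product.

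\textbf{Mutual inverseness.} Finally I would check that the two constructions are inverse to each other. Starting from $c$, forming $(P_1,P_2)$, and reconstructing $\tilde c$: the reconstructed form agrees with $c$ on each $V_i$ by the defining property of the Lemma \ref{cs1} isomorphism, and both make $V_1\perp V_2$, so $\tilde c=c$. Starting from $(P_1,P_2)$, forming $c$, and extracting $(\tilde P_1,\tilde P_2)$: since $c$ was built so that $p_i$ is conformal with $L$ identified to $L_i$ precisely via $P_i$, the uniqueness clause in Lemma \ref{cs1} forces $\tilde P_i=P_i$.

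\textbf{Main obstacle.} The only genuinely delicate point is bookkeeping the various weight-bundle identifications: the conformal structure $c_1$ on $M_1$ is $L_1^2$-valued, its pull-back lives in $p_1^*L_1^2$, and one must carefully use $P_1$ (or its dual/square) to land in $L^2$ so that the resulting $c$ is honestly a conformal structure on $M$ \emph{with weight bundle $L$}. Making this compatible in both directions, and matching it with the intrinsic isomorphism $p_i^*L_i\cong L$ of Lemma \ref{cs1}, is where care is needed; once the identifications are pinned down, everything else is formal.
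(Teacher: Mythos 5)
Your proposal is correct and follows essentially the same route as the paper: the forward direction uses the canonical fiberwise identification of weight bundles attached to a conformal submersion (the paper defines $p_i^L$ directly via $\sqrt{c(X,X)}\mapsto\sqrt{c_i((p_i)_*X,(p_i)_*X)}$, which is the same identification you extract from Lemma \ref{cs1}), and the converse direction defines $c=\tilde c_1+\tilde c_2$ by pulling back each $c_i$ and transporting its values to $L^2$ via the fiberwise isomorphism $P_i$, exactly as in the paper. Your explicit check of mutual inverseness is a small addition the paper leaves implicit.
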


\begin{proof} It is a general fact that for a {\em conformal map}
$f:(M,c)\ra (N,c')$ between two conformal manifolds, there is a
canonically associated bundle map $f^L:L_M\to \ L_N$ of the weight
bundles, isomorphic on each fiber (take any non-zero vector $X$ in
$(\ker df)^\perp$ and define $f^L(\sqrt{c(X,X)})=
\sqrt{c'(f_*X,f_*X)}$).

Therefore, given a conformal product structure on $M\simeq M_1\times
M_2$ ({\em i.e.}, a pair of conformal submersions $p_1:M\ra M_1$,
resp. $p_2:M\ra M_2$), we associate to it the induced bundle
homomorphisms $P_i:=p_i^L$, such that the diagram (\ref{semiromb})
commutes.

Conversely, let $M=M_1\times M_2$ and let $L$ denote the {\em weight
bundle} of the product manifold $M$ (which does not have any conformal
structure yet). Let $P_i:L\ra L_i$ be line bundle homomorphisms making
the diagram (\ref{semiromb}) commutative. The condition that $P_i$ are
isomorphic on each fiber just means that the pull-back bundles $p_i
^*L_i$ are both isomorphic with $L$. We then define $\tilde c_i\in
\Sym^2(M)\otimes p_i ^*(L_i^2)\cong \Sym^2(M)\otimes L^2$ by $\tilde
c_i(X,Y):=p_i ^*(c_i((p_i)_*(X),(p_i)_*(Y)))$ and $c=\tilde c_1+\tilde
c_2$.

\end {proof}

The following theorem, which is the main result of this section,
establishes the existence of a unique adapted Weyl structure on a
conformal product:

\begin{thrm} \label{red} 
A conformal structure $c$ on a manifold $M$ is a (local) conformal
product structure if and only if it carries a Weyl structure with
reducible holonomy. Moreover, the correspondence between the conformal
product structures and the Weyl structures with reduced holonomy is
one-to-one.
\end{thrm}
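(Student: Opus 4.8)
The plan is to prove the two implications separately, and in each direction to transport the splitting back and forth between the tangent bundle level and the weight bundle level, using the results of Sections~2 and~3.

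\textbf{From conformal product to reducible Weyl structure.} Suppose $c$ is a conformal product of $(M_1,c_1)$ and $(M_2,c_2)$, with orthogonal conformal submersions $p_i:M\to M_i$, and let $H_i:=\ker dp_{3-i}$ (so $H_1$ is horizontal for $p_1$, tangent to the $M_1$-factor). Orthogonality gives $TM=H_1\oplus H_2$ as an $c$-orthogonal direct sum, with $H_i$ integrable. First I would build the candidate Weyl structure via its connection on $L$. Using Proposition~\ref{constr}, the isomorphisms $P_i:L\to L_i$ identify $L\cong p_1^*L_1\cong p_2^*L_2$; pick any covariant derivatives $\nabla^i$ on $L_i$ and set $D$ on $L$ to agree with $p_1^*\nabla^1$ on $H_1$-directions and with $p_2^*\nabla^2$ on $H_2$-directions. (Since $H_1$ and $H_2$ are complementary, this prescribes $D$ on all of $TM$.) By Theorem~\ref{thw} this determines a Weyl structure $D$ on $(M,c)$. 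The point is then to check that $H_1$ (hence also $H_2=H_1^\perp$, since $D$ preserves $c$ and the Hodge operator) is $D$-parallel, i.e. $D_XY\in\Gamma(H_1)$ whenever $Y\in\Gamma(H_1)$. This is a computation with the Koszul formula~\eqref{Koszul}: write $X=X_1+X_2$, $Y=Y_1$ with $Y_1,X_1\in\Gamma(H_1)$, $X_2\in\Gamma(H_2)$; the $c(X_1,Z_2)$-, $c(Y_1,Z_2)$-, $c(X_1,Y_1)$-type terms vanish for $Z_2\in\Gamma(H_2)$ because $H_1\perp H_2$, the bracket terms $[X_i,Y_1]$ lie in the appropriate $H_j$ by integrability of $H_1,H_2$, and the $D(c(\cdot,\cdot))$ terms involving the mixed block vanish because $c$ restricted to $H_1$ is (the pullback of) $c_1$, which $D$ differentiates via $p_1^*\nabla^1$ along $H_1$ and trivially (as a horizontal, vertically parallel tensor, cf. Lemma~\ref{cs2}) along $H_2$. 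Hence $D$ has reducible holonomy.

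\textbf{From reducible Weyl structure to conformal product.} Conversely, suppose $D$ is a Weyl structure on $(M,c)$ whose (restricted) holonomy is reducible, preserving a proper subspace $H_1\subset T_xM$; extend $H_1$ to a $D$-parallel distribution on $M$ and let $H_2:=H_1^\perp$, which is also $D$-parallel since $D$ preserves $c$. Both distributions are integrable: $D$ is torsion-free, so for $X,Y\in\Gamma(H_1)$, $[X,Y]=D_XY-D_YX\in\Gamma(H_1)$. Working locally, let $M_1,M_2$ be local leaf spaces of the foliations integrating $H_1,H_2$, with submersions $p_i:M\to M_i$; then $\ker dp_1=H_2$ and $\ker dp_2=H_1$, so the $p_i$ are complementary and their kernels are $c$-orthogonal. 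It remains to put conformal structures $c_i$ on $M_i$ making the $p_i$ conformal submersions. This is exactly Corollary~\ref{cs3}: apply it to $p_1$ (whose horizontal distribution $H_1$ is $D$-parallel) to get $c_1$ on $M_1$, and to $p_2$ to get $c_2$ on $M_2$. By construction $c=c_1$-block $\oplus$ $c_2$-block on $H_1\oplus H_2$, so $M=M_1\times M_2$ (locally) carries $c$ as a conformal product of $(M_1,c_1)$ and $(M_2,c_2)$, with $D$ the adapted Weyl structure preserving the factors.

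\textbf{Uniqueness.} Finally I would argue the correspondence is one-to-one, i.e. the adapted Weyl structure on a given conformal product is unique. Given the splitting $TM=H_1\oplus H_2$, any $D$ preserving both $H_i$ and the conformal structure is forced, via the Koszul formula~\eqref{Koszul} and the parallelism conditions, to coincide with the one constructed above once its restriction to $L$ is pinned down; and that restriction is itself determined, since along $H_i$-directions $D$ on $L$ must descend to $M_i$ (by Lemma~\ref{cs2}, a section of $L$ parallel in the $H_{3-i}$-directions is a pullback from $M_i$), and the leaf-space conformal structure $c_i$ already fixes the metric-compatible part. Conversely a reducible Weyl structure remembers its invariant splitting, hence the conformal product it induces. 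The main obstacle I expect is the bookkeeping in the Koszul-formula computation that $H_1$ is $D$-parallel (and the matching verification that the $D$ so built really is independent of the auxiliary choices $\nabla^i$ and equals the structure coming from the leaf spaces); everything else is a more or less formal assembly of Lemmas~\ref{cs1}--\ref{cs2} and Corollary~\ref{cs3}.
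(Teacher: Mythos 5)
The direction ``reducible Weyl structure $\Rightarrow$ local conformal product'' and the strategy of reducing everything to a connection on $L$ match the paper, but your construction of that connection is where the argument breaks. You define $D$ on $L$ by pulling back \emph{arbitrary} covariant derivatives $\nabla^i$ from $L_i$, i.e. $D_V:=(p_1^*\nabla^1)_{V_1}+(p_2^*\nabla^2)_{V_2}$ for $V=V_1+V_2\in H_1\oplus H_2$. This is not the adapted connection, and the Weyl structure it induces does not preserve the splitting: in the Koszul computation the only surviving term is $2c(D_XY,Z)=-D_Z(c(X,Y))$ with $X,Y$ lifts from $M_1$ and $Z\in H_2$, and for your $D$ one has $D_Z(c(X,Y))=(p_2^*\nabla^2)_Z\bigl(p_1^*(c_1(X_1,Y_1))\bigr)$, which has no reason to vanish --- $(p_2^*\nabla^2)_Z$ does nothing special to a pullback from $M_1$. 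Your justification that $D$ differentiates $c_1$ ``trivially along $H_2$, cf.\ Lemma~\ref{cs2}'' is circular: that lemma \emph{assumes} the horizontal distribution is $D$-parallel, which is exactly what you are trying to prove. The correct, choice-free definition used in the paper is to take the horizontal space of $D$ at $l\in L$ to be $\ker dP_1\oplus\ker dP_2$; equivalently, pullbacks of sections of $L_1$ are declared $D$-parallel in $H_2$-directions and pullbacks of sections of $L_2$ are declared $D$-parallel in $H_1$-directions. With that definition $D_Z(c(X,Y))=0$ holds \emph{by definition}, since $c(X,Y)$ is a pullback from $M_1$.

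This is not merely an incomplete verification: by Lemma~\ref{rest}, for a non-closed conformal product the slice connections $D^y$ on $M_1\times\{y\}$ genuinely depend on $y$, so the adapted connection is never of the form $p_1^*\nabla^1$ in $H_1$-directions for any fixed $\nabla^1$ on $L_1\to M_1$; your ansatz can only reach the adapted structure in the closed case. The same confusion resurfaces in your uniqueness paragraph (``along $H_i$-directions $D$ on $L$ must descend to $M_i$'' is false unless $D$ is closed). The uniqueness you want follows instead either from Lemma~\ref{cs2} applied legitimately --- once $H_1$ is known to be $D$-parallel, \eqref{dl} forces $D_V(p_1^*l_1)=0$ for $V\in H_2$ and symmetrically, which pins down $D$ on $L$ --- or directly from \eqref{tilde}: if $D$ and $D+\tilde\tau$ both preserve the splitting, then taking $X=Y\in H_1$ in $\tilde\tau_X(Y)=\tau(X)Y+\tau(Y)X-c(X,Y)\tau$ forces the $H_2$-component of $\tau^\sharp$ to vanish, and symmetrically its $H_1$-component, so $\tau=0$.
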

\begin{proof}
Let us first prove that if a conformal manifold $(M,c)$ carries a Weyl
structure $D$ with reduced holonomy, then it is locally a conformal
product. Let $TM=H_1\oplus H_2$ be a $D$-invariant splitting of the
tangent bundle.  Because $D$ is torsion-free, the distributions $H_1$
and $H_2$ are integrable, therefore we have two orthogonal foliations
on $M$ tangent to these distributions. Locally, $M$ is then a product
manifold $M_1\times M_2$, and the foliations above are the fibers of
the canonical projections $p_i:M\ra M_i$.  Corollary \ref{cs3} then
shows that there exist conformal structures $c_1,\ c_2$ on $M_1$,
resp. $M_2$, such that the canonical projections are conformal
submersions.  \medskip

Conversely, suppose $(M,c)$ is the a conformal product with factors
$(M_1,c_1)$, resp $(M_2,c_2)$. We look for a Weyl structure $D$ that
preserves the canonical splitting $TM=H_1\oplus H_2$, with $H_1:=\ker
dp_2$ and $H_2:=\ker dp_1$.

By Theorem \ref{ws}, the set of Weyl structures is in 1--1
correspondence with the set of connections on the weight
bundle. Therefore, it is enough to specify the corresponding
connection $D$ on $L$. We describe $D$ using the diagram
(\ref{semiromb}) as follows: the horizontal space $H_l$ of $D$ at
$l\in L$ is the direct sum of $\ker dP_1$ and $\ker dP_2$. This
horizontal space defines a linear connection because the maps $P_1,\
P_2$ commute with the scalar multiplication on the fibers. In terms of
covariant derivative, this definition amounts to say that the
pull-back of a section of $L_1$ (resp. $L_2$) is $D$-parallel in the
direction of $H_2$ (resp. $H_1$).  We need to show that the induced
Weyl structure $D$ preserves $H_1$ and $H_2$.

Since the r\^oles of $H_1$ and $H_2$ are symmetric, it is enough to
prove that $c(D_XY,Z)=0$ for all vector fields $X,Y\in H_1$ and $Z\in
H_2$. Of course, we may assume that $X,Y$ are lifts of vector fields
on $M_1$ and $Z$ is a lift of a vector field on $M_2$. Then the
brackets $[X,Z]$ and $[Y,Z]$ vanish, because the vector fields $X$ and
$Y$, resp. $Z$ are defined on different factors of the product
$M_1\times M_2$, and the scalar products $g(X,Z)=g(Y,Z)=0$ for the
same reason.  The only {\em a priori} non-vanishing terms in the Koszul
formula (\ref{Koszul}) are thus: \be
2c(D_XY,Z)=-D_Z(c(X,Y))+c([X,Y],Z).\ee The first term vanishes by the
definition of $D$ and the second one because $[X,Y]\in H_1$ and $Z\in
H_2$.

\end{proof}

\begin{defi} The  Weyl structure defined on a conformal product
  $(M,c)$ by the result above is called the {\em adapted Weyl
  structure}.
\end{defi}
\begin{defi} 
A conformal product $(M,c)$ is called a {\em closed conformal product}
if the adapted Weyl structure is closed.
\end{defi}

It is easy to check that a conformal product is closed if and only if
the diagram \eqref{semiromb} can be completed by bundle homomorphisms
$Q_i:L_i\ra L_0$, isomorphic on each fiber (where $L_0$ is the weight
bundle of the point manifold $\bullet$), such that the resulting
diagram is commutative as well:
\begin{equation}\label{romb}
\begin{diagram}[size=20pt,nohug]
   & & & & L & & & & \\ & & &\ldTo^{P_1}(4,4)&\dTo&\rdTo^{P_2}(4,4)& &
   & \\ & & & & M & & & & \\ & & & \ldTo^{p_1} & & \rdTo^{p_2} & & &
   \\ L_1&\rTo&M_1& & & &M_2&\lTo&L_2 \\ &\rdTo_{Q_2}(4,4)&&\rdTo &
   &\ldTo &&\ldTo_{Q_1}(4,4)& \\ & & & &\bullet& & & & \\ & & & &\uTo&
   & & & \\ & & & & L_0& & & & \\
\end{diagram}
\end{equation}

\begin{elem}\label{mix}
Let $F$ be the Faraday form of the adapted Weyl structure on a
conformal product. Then $F(X,Y)=0$ if $X,Y\in H_1$ or $X,Y\in H_2$.
\end{elem}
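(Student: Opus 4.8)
The plan is to compute the Faraday form $F=d\theta$ of the adapted Weyl structure by exploiting the explicit description of $D$ on the weight bundle $L$ given in the proof of Theorem~\ref{red}: the pull-back to $M$ of a section of $L_1$ is $D$-parallel in the $H_2$-directions, and the pull-back of a section of $L_2$ is $D$-parallel in the $H_1$-directions. Since $F$ is the curvature of the connection $D$ on the line bundle $L$, it is computed locally as $d\theta$ for any Lee form $\theta$, and by $\R$-linearity in the gauge it suffices to understand $\theta$ on a convenient local trivialization of $L$.

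First I would choose local nowhere-vanishing sections $l_1$ of $L_1$ and $l_2$ of $L_2$, so that $l:=P_1^{-1}(p_1^*l_1)=\dots$ — more precisely, using $p_1^*L_1\cong L\cong p_2^*L_2$, I get two positive sections $s_1,s_2$ of $L$ coming from $l_1,l_2$ respectively, and a local positive section $l$ of $L$ with $s_i=f_i\,l$ for positive functions $f_i$. The key point from the construction of $D$ is that $s_1$ is $D$-parallel along $H_1$ — wait, it is the pull-back of a section of $L_1$, hence parallel along $H_2$; and $s_2$ is parallel along $H_1$. Writing $D_Xs_1=\theta_1(X)s_1$ for the Lee form $\theta_1$ associated to the gauge $s_1$, we get $\theta_1|_{H_2}=0$; similarly $\theta_2|_{H_1}=0$, where $\theta_2$ is the Lee form for the gauge $s_2$. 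Since any two Lee forms differ by an exact form ($\theta_1-\theta_2=d\log(f_2/f_1)$, say), the Faraday form is $F=d\theta_1=d\theta_2$.

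Next I would evaluate $F(X,Y)$ for $X,Y\in H_1$: using $F=d\theta_2$ and $\theta_2|_{H_1}=0$, and taking $X,Y$ to be lifts of vector fields on $M_1$ (so that $[X,Y]\in H_1$ as well), the formula $d\theta_2(X,Y)=X(\theta_2(Y))-Y(\theta_2(X))-\theta_2([X,Y])$ gives $F(X,Y)=0$ since all three terms vanish. The same argument with $\theta_1$ and lifts of vector fields on $M_2$ handles the case $X,Y\in H_2$. The fact that one may reduce to such product lifts when evaluating a $2$-form at a point is standard (the distributions $H_1,H_2$ are spanned pointwise by such lifts).

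The only mild obstacle is bookkeeping with the three line bundles and their identifications: one must make sure that $\theta_2$ really is a globally-defined-enough $1$-form on the relevant open set and that "pull-back of a section of $L_2$, parallel along $H_1$" translates correctly into the vanishing of its Lee form restricted to $H_1$ in the chosen trivialization. Once this translation is in place — which is immediate from equation~\eqref{cg1} and the definition of $D$ in the proof of Theorem~\ref{red} — the rest is the one-line Cartan-formula computation above. I do not expect any analytic difficulty; the content is entirely the observation that $D$ admits, on each factor's directions, a flat gauge.
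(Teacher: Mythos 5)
Your proposal is correct and takes essentially the same route as the paper: the paper picks the pull-back to $L$ of a section of $L_1$ (which by construction of the adapted Weyl structure is $D$-parallel along the involutive distribution $H_2$) and evaluates $F(X,Y)l=D_XD_Yl-D_YD_Xl-D_{[X,Y]}l=0$ directly, which is exactly your Cartan-formula computation $d\theta(X,Y)=0$ in the gauge where the Lee form vanishes on that distribution. The only cosmetic difference is that you phrase it via local trivializations and Lee forms rather than via the covariant derivative on $L$.
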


\begin{proof} 
Let $l_1$ be a section of the weight bundle $L_1$ of $M_1$. Lemma
\ref{cs1} shows that its pull-back can be identified with a section
$l$ of $L$, and Lemma \ref{cs2} shows that $D_Xl=0$ for every $X\in
H_2$. Let $X,Y$ be sections of $H_2$. Since $H_2$ is involutive, we
have $[X,Y]\in H_2$, so
$$F(X,Y)l=D_XD_Yl-D_YD_Xl-D_{[X,Y]}l=0.$$ The vanishing of $F$ on
$H_1$ is similar.

\r

\obs One can show that, for any given distribution $E$ on a conformal
manifold $M$, there is a unique {\em adapted} Weyl structure $\nb$, in
the sense that some naturally defined tensors, depending on the
splitting of $TM\simeq E\oplus E^\perp$ have minimal covariant
derivative (see \cite{calderbank}, Prop. 3.3).
\eobs

\begin{elem}\label{rest} Let $(M_1,c_1)$ and $(M_2,c_2)$ be two
  conformal manifolds and let $c$ be a conformal product structure on
  $M=M_1\times M_2$ with adapted Weyl structure $D$. Then each slice
  $M_1\times \{y\}\simeq M_1$ carries a Weyl structure $D^y$ such that
  $p_1^*(D^y_XT)=D_{X}(p_1^*T)$ at points of the form $(x,y)$ for all
  vectors $X$ and tensor fields $T$ on $M_1$. The Weyl structure $D$
  is closed if and only if all connections $D^y$ coincide.
\end{elem}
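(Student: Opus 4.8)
The plan is to construct the connection $D^y$ on each slice $M_1\times\{y\}$ by restriction, using the fact that $H_1 = \ker dp_2$ is $D$-parallel, and then to check the desired intertwining and closedness properties. First I would fix $y\in M_2$ and identify $M_1$ with the slice $M_1\times\{y\}$ via $p_1$; the tangent bundle of this slice is precisely $H_1|_{M_1\times\{y\}}$. For vectors $X$ tangent to the slice and a tensor field $T$ on $M_1$, I would define $D^y_X T$ by declaring $p_1^*(D^y_XT):=D_X(p_1^*T)$ at points $(x,y)$, where $p_1^*T$ denotes the horizontal lift (using the identification $p_1^*L_1\simeq L$ from Lemma \ref{cs1}). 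For this to be well-defined I must verify that $D_X(p_1^*T)$ is again horizontal, i.e. that it lies in the image of $p_1^*$ restricted to the slice: this is where the $D$-invariance of the splitting $TM=H_1\oplus H_2$ is used — since $X\in H_1$ and $p_1^*T$ is built from horizontal data, $D_X$ preserves horizontality. One also checks torsion-freeness and compatibility with $c_1$ for $D^y$ directly from the corresponding properties of $D$ and the fact that $p_1$ is a conformal submersion, so $D^y$ is indeed a Weyl structure on $M_1$.

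Next I would address the closedness claim. Recall $D$ is closed iff $L$ admits a local $D$-parallel section. If $D$ is closed, pick a local $D$-parallel section $l$ of $L$; its restriction along each slice is $D^y$-parallel on $L_1$ by the intertwining property, so each $D^y$ is closed — but more is needed: I must show the $D^y$ all coincide. Here the key point is that a $D$-parallel $l$ on $M$ restricts, via the identification $p_1^*L_1\simeq L$, to the pull-back of a single section $l_1$ of $L_1$: indeed $D_V l = 0$ for $V\in H_2$ (Lemma \ref{cs2}, since a $D$-parallel section is in particular parallel in vertical directions for $p_1$), so $l$ descends to $M_1$, and then $D^y$ is the Levi-Civita connection of the metric determined by $l_1$ for every $y$ — hence independent of $y$. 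Conversely, if all the $D^y$ coincide with a single Weyl structure $D^1$ on $M_1$, I would argue that $D$ is closed. One way: by Lemma \ref{mix} the Faraday form $F$ of $D$ vanishes on $H_1$ and on $H_2$; and the mixed components $F(X,Z)$ with $X\in H_1$, $Z\in H_2$ measure exactly the $y$-dependence of $D^y$ — if the $D^y$ are all equal this mixed part vanishes too, so $F\equiv 0$ and $D$ is closed.

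The main obstacle I expect is the converse direction: making precise the statement that the mixed components of the Faraday form encode the variation of $D^y$ with $y$, and deducing from "all $D^y$ equal" that $F$ vanishes. Concretely, for $X\in H_1$ and $Z\in H_2$ one wants a formula of the shape $F(X,Z)l_1 = \partial_Z(\text{Lee form of }D^y)(X)\,l_1$ (in a local gauge), obtained by applying the curvature operator $R^D$ on $L$ to the pull-back of a section $l_1$ of $L_1$ and using $D_Z(p_1^*l_1)$ together with $[X,Z]=0$ for a product-adapted frame as in the proof of Theorem \ref{red}. Once this identification is in place, the equivalence is immediate; so the bulk of the work is this one computation linking $F$, the brackets, and the $y$-derivative of the slice connections. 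The well-definedness step (horizontality of $D_X(p_1^*T)$) and the "only if" direction are comparatively routine given Lemmas \ref{cs1}, \ref{cs2} and \ref{mix}.
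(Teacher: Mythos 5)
Your construction of $D^y$ (define it by the intertwining identity, check that $D_X(p_1^*T)$ stays horizontal because the splitting is $D$-parallel, and verify torsion-freeness and compatibility with $c_1$) agrees with the paper's, which compresses this into one sentence citing Lemmas \ref{cs1} and \ref{cs2}. For the equivalence ``$D$ closed $\Leftrightarrow$ all $D^y$ coincide'', however, you take a genuinely different and in fact more economical route. The paper works on the tangent bundle: for lifted fields it computes $R^D(U,X,Y,Z)=c(D_UD_XY,Z)$, feeds this into the pair-symmetry defect formula \eqref{fa} to obtain the identity \eqref{dy}, reads off the forward implication from $F=0$, and for the converse isolates the symmetric part $F(U,X)\Id$ of the endomorphism $R^D(U,X)$ to conclude that the mixed Faraday components vanish. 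You instead work entirely on the weight bundle: for the forward direction a local $D$-parallel section $l$ of $L$ descends, by the converse part of Lemma \ref{cs2}, to a section $l_1$ of $L_1$ that is $D^y$-parallel for every $y$, so by Theorem \ref{thw} every $D^y$ is the Levi-Civita connection of the metric determined by $l_1$ and they all coincide; for the converse, the computation you sketch does go through, namely with $l=p_1^*l_1$, $D_Zl=0$ and $[Z,X]=0$ one gets $F(Z,X)\,l=D_ZD_Xl=Z\bigl(\theta^y(X)\bigr)\,l$, identifying the mixed Faraday components with the $y$-derivative of the Lee forms of the $D^y$, after which Lemma \ref{mix} gives $F=0$. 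Both arguments are sound; yours avoids \eqref{fa} entirely and only uses that the curvature of $D$ on $L$ is $F$, which is the cleaner mechanism here, whereas the paper's identity \eqref{dy} yields the slightly stronger pointwise formula for $D_UD_XY$ in terms of $F$.
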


\begin{proof} The restriction to each slice $M_1\times \{y\}$ defines
  a covariant derivative $D^y$ on $M_1$, which by Lemmas \ref{cs1} and
  \ref{cs2} preserves the conformal structure of $M_1$. In order to
  prove the last statement, it is enough to consider the case when $T$
  is a vector field on $M_1$. We consider vector fields $U_2$ on $M_2$
  and $X_1,\ Y_1,\ Z_1$ on $M_1$, and denote their canonical lifts to
  $M$ by $U$, respectively by $X,Y,Z$.  Lemma \ref{cs2} shows that
  $D_UY=0$, which together with $[U,X]=0$ yields $
  R^D(U,X,Y,Z)=c(D_UD_XY,Z).$ On the other hand, $R^D_{Y,Z}U$ is
  tangent to $M_2$, so $R^D(Y,Z,U,X)$ vanishes. Plugging these two
  relations into (\ref{fa}) yields \be\label{dy} c(D_UD_XY,Z)=(Z\wedge
  F(Y)-Y\wedge F(Z))(U,X)+F(U,X)c(Y,Z).\ee If $D$ is closed,
  ({\em i.e.} $F=0$) we thus get $D_UD_XY=0$, which just means (by Lemma
  \ref{cs2} again) that the vector field $D_XY$ is the lift to $M$ of
  a vector field on $M_1$, and thus $D^y_XY$ does not depend on $y$.

Conversely, if $D^y$ does not depend on $y$, we get $D_UD_XY=0$,
therefore $R^D(U,X)Y=0$. But the endomorphism $R^D(U,X):TM\ra TM$
decomposes as a skew-symmetric endomorphism (in Riemannian geometry
this is the only piece) and a symmetric one, which is equal to
$F(U,X)\Id$. This piece has to vanish now that $R^D(U,X)Y=0$,
hence $F(U,X)=0$. This, together with Lemma \ref{mix}, proves that
$F=0$.

\r

\obs\label{obs} It follows from Lemma \ref{mix} that $D^y$ are {\em
closed} Weyl structures on $M_1$, for any $y\in M_2$. Moreover, a
$D^y$-parallel metric on $M_1$ is only determined up to a factor
depending on $y$ alone, and the
proof of Theorem \ref{red} tells us to which proportionality class 
of sections of $L_1\ra M_1$ it corresponds: Any section
$\sigma_2$ of the scale bundle $L_2\ra M_2$ induces a section
$\tilde\sigma_2$ of $L\simeq p_2^*L_2$ over $M_1\times M_2$, which is,
by definition, $D$-parallel in the directions of the $M_1$-leaves.

In other words, any metric $g_2\in c_2$ on $M_2$ defines a metric
$g\in c$, such that the restriction of $g$ to the $M_2$ leaves is
$g_2$ (leaf-independent metric), and $D^y$ is the Levi-Civita
connection of the metric $g|_{M_1\times \{y\}}$ (which depends on
$y$).

In Section 6 we give more details about this construction.

 \eobs

\obs The results in this section hold under the implicit assumption
that each factor of the conformal product has dimension at least one,
because we need to identify the weight bundle of the product with the
pull-back of those of each factor (see Remark \ref{32}).  \eobs

\section{Weyl-parallel forms}

In this section we study the following problem, which motivates, as we
shall see below, the notion of conformal product. Given a conformal
manifold $(M^n,c)$, and a weightless $k$-form $\omega\in\Lambda ^k_0M$
for some $1\le k\le n-1$, does there exist a Weyl structure $D$ such
that $\o$ is $D$-parallel?

\obs We only consider weightless forms since if $T$ is a
(non-vanishing) $D$-parallel weighted tensor, then $T/\sqrt{c(T,T)}$
is a $D$-parallel weightless tensor. Moreover, if $T$ has non-zero
weight, the Weyl structure is exact (due to the fact that the
conformal norm $c(T,T)$ is a $D$-parallel weight).\eobs

\obs For a conformal product $M=M_1\times M_2$, the weightless volume
forms on the factors induce weightless forms which are parallel with
respect to the unique adapted Weyl structure (which preserves the
splitting $TM\simeq TM_1\oplus TM_2$). This will turn out to be one of
the main examples of parallel weightless forms on conformal
manifolds.\eobs

We start with the following useful result.

\begin{elem} \label{tens1}Let $\omega\in\Lambda ^k_0M$ be a weightless
  $k$-form ($1\le k\le n-1$). Then there exists {\em at most} one Weyl
structure with respect to which $\omega$ is parallel.
\end{elem}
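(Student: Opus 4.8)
The plan is to show that if two Weyl structures $D$ and $D'$ both parallelize the same nowhere-vanishing weightless $k$-form $\o$, then they must coincide. By the fundamental theorem (Theorem~\ref{thw}), $D$ and $D'$ differ by a $1$-form $\tau$, and by \eqref{dd} their difference acts on all weighted tensor bundles as the derivation $\tilde\tau_X$ defined in \eqref{tilde}. So the condition $D\o = D'\o = 0$ translates into the algebraic constraint $\tilde\tau_X(\o) = 0$ for all $X \in TM$, where $\tilde\tau_X$ acts on $\Lambda^k_0M$ as a derivation. The whole statement thus reduces to a purely pointwise linear-algebra claim: if $\tau \in T^*_xM$ and $\o \in \Lambda^k_0 T^*_xM$ is nonzero with $1 \le k \le n-1$, then $\tilde\tau_X \cdot \o = 0$ for all $X$ forces $\tau = 0$.

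To carry this out I would first work out the action of $\tilde\tau_X$ on a $k$-form explicitly. Since $\tilde\tau_X$ is a derivation extending the endomorphism $Y \mapsto \tau(X)Y + \tau(Y)X - c(X,Y)\tau^\sharp$ on $TM$ (and scalar multiplication by $k\tau(X)$ on $L^k$), one gets, for $\o \in \Lambda^k_0M$,
\be
\tilde\tau_X(\o) = k\,\tau(X)\,\o - \tau(X)\,\o - \tau^\sharp \es (X^\flat \wedge \o) - X^\flat \wedge (\tau^\sharp \es \o),
\ee
after fixing a metric $g \in c$ to identify everything with ordinary forms; equivalently one can write this as $\tilde\tau_X(\o) = (k-1)\tau(X)\o - \tau^\sharp\es(X^\flat\wedge\o) - X^\flat\wedge(\tau^\sharp\es\o)$ using the interior-product Leibniz rule. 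Setting this to zero for every $X$ is then a linear condition on $\tau$ alone (once $\o$ is fixed), so it suffices to derive a contradiction from $\tau \ne 0$.

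To get the contradiction, assume $\tau \ne 0$ at the point $x$ and normalise $|\tau|=1$, writing $e_1 := \tau^\sharp$. Decompose $\o = e^1 \wedge \alpha + \beta$ with $\alpha \in \Lambda^{k-1}$, $\beta \in \Lambda^k$ not containing $e^1$ (i.e.\ $e_1 \es \alpha = e_1 \es \beta = 0$). Evaluating $\tilde\tau_X(\o)=0$ first for $X = e_1$ gives a relation between $\alpha$ and $\beta$, and then for $X$ orthogonal to $e_1$ gives further constraints; a short bookkeeping with interior products should force both $\alpha = 0$ and $\beta = 0$, hence $\o = 0$, contradicting $\o \ne 0$. (The cases $k=1$ and $k=n-1$, where the form or its Hodge dual is "almost" a volume form, may need a separate quick check, but here is precisely where the hypothesis $1\le k\le n-1$ enters: a weightless volume form is parallel for \emph{every} Weyl structure, which is why $k=0,n$ are excluded.) The main obstacle I anticipate is purely combinatorial: organising the interior-product identities cleanly enough that the vanishing of $\alpha$ and $\beta$ falls out without a messy case analysis; choosing a good adapted basis (with $e_1=\tau^\sharp$) and exploiting the derivation property rather than expanding in coordinates should keep this under control.
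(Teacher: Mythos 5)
Your overall strategy is the same as the paper's: by Theorem \ref{thw} and \eqref{dd} the two Weyl structures differ by $\tilde\tau_X$, so everything reduces to the pointwise claim that $\tilde\tau_X(\o)=0$ for all $X$ forces $\tau=0$ when $\o\ne 0$ and $1\le k\le n-1$. The problem is that your explicit formula for $\tilde\tau_X(\o)$ is wrong. Using the contraction identity $\tau^\sharp\i(X^\flat\we\o)=\tau(X)\o-X^\flat\we(\tau^\sharp\i\o)$, your displayed expression collapses to $(k-2)\,\tau(X)\,\o$. If that were the action, the condition would be vacuous for $k=2$ (so the lemma would be \emph{false} for weightless $2$-forms), and for $k\ne 2$ it would give $\tau=0$ immediately, making your adapted-basis decomposition $\o=e^1\we\alpha+\beta$ pointless; this sanity check should have flagged the error. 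The correct computation: on $\L^k_0M=\L^kM\ot L^k$ the homothety part $\tau(X)\,\Id$ of $\tilde\tau_X$ contributes $-k\tau(X)\o$ on the covariant factor, which cancels exactly against the $+k\tau(X)\o$ coming from the weight $L^k$; only the skew-symmetric part $Y\mapsto\tau(Y)X-c(X,Y)\tau^\sharp$ survives, and its derivation extension is
$$\tilde\tau_X(\o)=X^\flat\we(\tau^\sharp\i\o)-\tau\we(X\i\o).$$
Concretely, the term coming from $Y\mapsto\tau(Y)X$ is $-\tau\we(X\i\o)$, not $-X^\flat\we(\tau^\sharp\i\o)$; your two interior/exterior terms have the roles of $\we$ and $\i$ misassigned, and the resulting expression is not the derivation extension of the stated endomorphism.

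Even granting a corrected formula, the decisive algebraic step in your write-up is only a promissory note (``a short bookkeeping \dots should force $\alpha=0$ and $\beta=0$''), and it is precisely there that the content lies. The paper dispatches it without any case analysis: wedge the identity $X\we(\tau\i\o)-\tau\we(X\i\o)=0$ with $X=e_i$ and sum over an orthonormal basis to get $k\,\tau\we\o=0$; contract with $X=e_i$ and sum to get $(n-k)\,\tau\i\o=0$; since $1\le k\le n-1$ both coefficients are nonzero, so $\tau\we\o=0$ and $\tau\i\o=0$, and if $\tau\ne0$ the first gives $\o=\tau\we\o'$ while the second then gives $\o'=0$, contradicting $\o\ne0$. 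You should either adopt this summation argument or actually carry out your basis computation starting from the correct formula; as it stands the proof has both a wrong key identity and an unexecuted final step.
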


\begin{proof}
Assume that $D\omega=D'\omega=0$, denote $D'=D+\tilde\tau$ like in
(\ref{dd}) and let $g\in c$ be any ground metric, used to identify
1-forms and vectors. For every vector field $X$ we get \be
\label{wp}0=D_X\omega-D'_X\omega=\tilde\tau_X(\omega)
=X\wedge(\tau\i\omega)-\tau\wedge(X\i\omega).\ee Taking the exterior
product with $X$, and setting $X$ to be an element of some
$g$-orthonormal basis $\{e_i\}$ we get, after adding up all the
resulting equations, that $0=\tau\wedge(k\omega)$. Similarly, taking
the interior product with $X$ and summing over some $g$-orthonormal
basis $X=e_i$ yields $0=(n-k+1)(\tau\i\omega)-\tau\i\omega$, thus
$\tau\i\omega=0$. But, if $\tau\ne 0$, the condition
$\tau\wedge\omega=0$ implies that $\tau$ is a factor of
$\omega=\tau\wedge\omega'$, and $\tau\i\omega=0$ implies $\omega'=0$
which contradicts the non-triviality of $\omega$.

\r

\obs Consider the linear map $\alpha:T^*M\to T^*M\otimes \Lambda
^k_0M$ defined by
$$\alpha(\tau)(X) =\tilde\tau_X(\omega).$$ The proof of the lemma
above show that if $\o$ is a nowhere vanishing section of $\Lambda
^k_0M$, then $\alpha$ is injective, and there exists a unique Weyl
structure $D^\omega$ such that $D^\omega\o$ is orthogonal to the image
of $\alpha$. We call $D^\omega$ the {\em minimal} Weyl structure
associated to $\omega$.  \eobs

Our problem can thus be reformulated as follows: Given a conformal
manifold $(M^n,c)$, find all nowhere vanishing sections $\omega$ of
$\Lambda ^k_0M$ for some $1\le k\le n-1$, such that
$D^\omega\omega=0$.  Notice that $\o$ being nowhere vanishing is a
necessary condition for the existence of a Weyl structure $D$ with
$D\o=0$.

We start with the case where the minimal Weyl structure $D^\o$
associated to $\o$ is closed. Since our study is local, there exists
some metric $g\in c$ whose Levi-Civita connection is $D^\o$. Since $g$
trivializes the weight bundle, $\o$ induces a parallel $k$-form on
$M$. Using the Berger-Simons holonomy theorem, our problem in this
case reduces to a purely algebraic one and its answer can be
synthesized in the following classical statement.

\begin{ath}\label{riem} Let $(M^n,g)$ be a simply connected Riemannian
  manifold with Levi-Civita covariant derivative $\n$. The space of
  parallel $k$-forms on $M$ is isomorphic to the space of fixed points
  of the holonomy group $\Hol(\n)$ of $\n$ acting on $\L^k(\RM^n)$. If
  $\Hol(\n)$ acts irreducibly on $\RM^n$, then either $M=G/H$ is a
  symmetric space, $Hol(\n)=H$ and $H$ is listed in \cite{bes}, Tables
  1-4, pp.201-202, or $Hol(\n)$ belongs to the Berger list
  (\cite{bes}, Corollary 10.92). If $\Hol(\n)$ acts reducibly on
  $\RM^n$, then $\Hol(\n)$ is diagonally embedded in $\SO_n$ as a
  product $\Hol(\n)=H_1\times\ldots\times H_l$, where each
  $H_i\in\SO_{n_i}$ belongs to the lists above.
\end{ath}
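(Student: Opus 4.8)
The plan is to prove Theorem~\ref{riem} by assembling three standard ingredients from Riemannian holonomy theory: the de~Rham decomposition, the Berger--Simons classification of irreducible holonomy groups, and the elementary representation-theoretic fact that fixed points in a tensor product decompose as a tensor product of fixed points. First I would recall that for a simply connected complete Riemannian manifold a $k$-form is parallel if and only if it is invariant under the holonomy group acting on $\Lb{k}(T_xM)\cong\Lb{k}(\R^n)$ at a fixed base point $x$; this is just the holonomy principle applied to the representation of $\Hol(\n)$ on $\Lb{k}$, and it gives the claimed isomorphism between parallel $k$-forms and $\Hol(\n)$-fixed vectors in $\Lb{k}(\R^n)$.

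Next I would treat the irreducible case. If $\Hol(\n)$ acts irreducibly on $\R^n$, then the Berger--Simons theorem (as stated e.g. in \cite{bes}, Corollary~10.92 together with the symmetric space case, \cite{bes} Tables~1--4) applies verbatim: either $(M,g)$ is locally symmetric, hence by simple connectedness a Riemannian symmetric space $G/H$ with $\Hol(\n)=H$ appearing in the cited tables, or $\Hol(\n)$ is one of the groups on Berger's list. There is essentially nothing to prove here beyond quoting these results; the only care needed is to note that the de~Rham--type argument forces either local symmetry or a Berger holonomy once irreducibility and simple connectedness are assumed, and that the ambient group is $\SO_n$ since $g$ is Riemannian and $M$ is connected.

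For the reducible case I would invoke the de~Rham decomposition theorem: a simply connected complete Riemannian manifold with reducible holonomy splits isometrically as a product $M_0\times M_1\times\cdots\times M_l$, where $M_0$ is flat and each $M_i$ (for $i\ge 1$) is irreducible; correspondingly $\R^n=\R^{n_0}\oplus\R^{n_1}\oplus\cdots\oplus\R^{n_l}$ as an orthogonal $\Hol(\n)$-module with $\Hol(\n)=H_1\times\cdots\times H_l$ diagonally embedded in $\SO_{n_1}\times\cdots\times\SO_{n_l}\subset\SO_n$ (absorbing the flat factor, if present, into the list as a trivial $H_i$ on $\SO_{n_i}$, or simply noting it contributes no constraint). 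Each $H_i$ then acts irreducibly on $\R^{n_i}$, so by the previous step $H_i$ belongs to the symmetric or Berger lists. To finish, I would split the exterior power along this decomposition, $\Lb{k}(\R^n)\cong\bigoplus_{j_0+\cdots+j_l=k}\Lb{j_0}(\R^{n_0})\otimes\cdots\otimes\Lb{j_l}(\R^{n_l})$, and use that the space of $\Hol(\n)$-fixed vectors in a tensor product of modules of the commuting factors $H_1,\dots,H_l$ is the tensor product of the spaces of $H_i$-fixed vectors; combined with the holonomy principle this expresses the parallel $k$-forms on $M$ in terms of the parallel forms on the irreducible factors.

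The statement as written is essentially a synthesis of textbook results, so the ``hard part'' is not a deep obstacle but rather a matter of correct bookkeeping: making sure the flat factor is handled consistently with the phrasing ``$H_i\in\SO_{n_i}$ belongs to the lists above'' (the trivial group on $\R^1$, or more generally a torus on a flat factor, must be allowed), and making sure that ``simply connected'' is used in the right places (it is what upgrades locally symmetric to globally symmetric, and what makes the holonomy principle and de~Rham's theorem clean). I expect the author's proof to simply cite \cite{bes} for the irreducible case and the de~Rham theorem for the reducible one, with the tensor-product-of-fixed-points remark supplied in one line, since it was already flagged in the introduction.
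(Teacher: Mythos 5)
Your proposal is correct and follows exactly the route the paper intends: the paper states Theorem \ref{riem} without proof, presenting it as a synthesis of classical results (the holonomy principle, the de~Rham decomposition, and the Berger--Simons classification as cited in \cite{bes}), together with the tensor-product-of-fixed-points observation already flagged in the introduction. Your assembly of these ingredients, including the care about the flat factor and the role of simple connectedness, matches what the authors take for granted.
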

 
Another preliminary result concerns the special case of a weightless
2-form whose associated endomorphism is an almost complex structure
$J$. This case is also classical and completely understood (see
{\em e.g.} \cite{pps}, Section 2):

\begin{elem}\label{lck} An almost complex structure $J$
  compatible with the conformal structure on $(M^{2m},c)$ is parallel
  with respect to some Weyl structure $D$ if and only if
\begin{itemize}
\item $m=1\mbox{ or }2$ and $J$ is integrable;
\item $m\ge 3$ and $(M,c,J)$ is a locally conformally K\"ahler
  manifold.
\end{itemize}
\end{elem}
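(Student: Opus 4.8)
The plan is to recast the question in the classical language of Hermitian geometry and then quote known facts. Since $J$ is $c$-orthogonal it corresponds to the weightless $2$-form $\omega\in\Lambda^2_0M$ given by $\omega(X,Y)=c(JX,Y)$, and since $Dc=0$ for any Weyl structure, $D\omega=0$ is equivalent to $DJ=0$; by Lemma \ref{tens1} at most one Weyl structure can have this property. The first step is the observation, valid in every dimension, that a torsion-free connection $D$ with $DJ=0$ has vanishing Nijenhuis tensor (writing $[X,Y]=D_XY-D_YX$ and expanding $N_J$, all the terms cancel), so by Newlander--Nirenberg $DJ=0$ already forces $J$ to be integrable.

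Next I would dispose of $m=1$: on a conformal surface the form $\omega$ is, up to sign, the weightless volume form $*1$, which is parallel for \emph{every} Weyl structure because the Hodge operator is (as noted after Definition \ref{ws}), and every $c$-compatible $J$ on a surface is integrable; so both sides of the equivalence hold unconditionally when $m=1$.

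For $m\ge 2$ I would fix a metric $g\in c$ and write a Weyl structure with Lee form $\theta$ as $D_XY=\n^g_XY+\theta(X)Y+\theta(Y)X-g(X,Y)\theta^\sharp$ (from \eqref{dd}). A direct computation then turns $DJ=0$ into
\be\label{nabJ}
(\n^g_XJ)Y=g(J\theta^\sharp,Y)\,X+g(X,JY)\,\theta^\sharp+\theta(Y)\,JX-g(X,Y)\,J\theta^\sharp,
\ee
and, taking the cyclic sum in \eqref{nabJ}, into $d\omega_g=-2\,\theta\wedge\omega_g$ where $\omega_g:=g(J\cdot,\cdot)\in\Lambda^2M$; in particular $\theta$ must be $-\tfrac12$ times the Lee form of $(M,g,J)$. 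For the converse in dimension $4$ I would invoke the Gray--Hervella classification: the covariant derivative $\n^g J$ of a Hermitian metric lies in $W_3\oplus W_4$, and the $W_3$-component vanishes identically in complex dimension $2$, so integrability of $J$ already forces \eqref{nabJ} with $\theta$ determined by $d\omega_g=-2\theta\wedge\omega_g$ (uniquely solvable, since wedging with $\omega_g$ is an isomorphism $\Lambda^1M\to\Lambda^3M$ on a $4$-manifold); the corresponding $D$ then satisfies $DJ=0$. Together with the first step this gives the case $m=2$; this is classical, see \cite{pps}, Section 2.

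Finally, for $m\ge3$: from $DJ=0$ and $d\omega_g=-2\theta\wedge\omega_g$, differentiating once more and using $\theta\wedge\theta=0$ gives $d\theta\wedge\omega_g=0$, and since wedging with the nondegenerate $2$-form $\omega_g$ is injective on $\Lambda^2M$ when $m\ge3$ (hard Lefschetz), we get $d\theta=0$: the Weyl structure is closed. Hence locally $D=\n^{g'}$ for some $g'\in c$, and $\n^{g'}J=DJ=0$ makes $(M,g',J)$ K\"ahler, so $(M,c,J)$ is locally conformally K\"ahler. The converse direction is routine: a locally conformally K\"ahler structure provides, near each point, a K\"ahler metric in $c$ compatible with $J$, any two such differ by a constant (in complex dimension $\ge2$ a conformal rescaling keeping a K\"ahler metric K\"ahler is constant), so their Levi-Civita connections glue to a global closed Weyl structure with $DJ=0$. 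The main obstacle is the identity \eqref{nabJ} (a routine but somewhat lengthy computation) together with the Gray--Hervella input that $W_3=0$ in complex dimension $2$; all the remaining steps are formal, relying on Lemma \ref{tens1}, the structure of closed Weyl connections, and pointwise Lefschetz.
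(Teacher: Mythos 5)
Your argument is correct and follows essentially the same route as the paper's: the expression of $\n^g J$ in terms of the Lee form, the consequence $d\o_g=-2\theta\we\o_g$, the Lefschetz-type injectivity of wedging with $\o_g$ giving $d\theta=0$ when $m\ge3$, and the gluing of local K\"ahler metrics for the converse. The only differences are cosmetic: for the $m=2$ converse you invoke the Gray--Hervella vanishing of $W_3$ in real dimension $4$, where the paper derives the same fact directly from the Kobayashi--Nomizu formula relating $\n\o$ and $d\o$ for an integrable $J$, and you spell out the trivial case $m=1$, which the paper leaves implicit.
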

\begin{proof} We provide the proof for the reader's convenience.
Assume that $DJ=0$. Since $D$ is torsion free, $J$ is integrable
(\cite{kn2}, Ch. 9, Corollary 3.5). Let $g$ be any metric in the
conformal class $c$ with the associated 2-form $\o(X,Y):=g(JX,Y)$.
Using $g$, we identify 1-forms with vectors, and 2-forms with
skew-symmetric endomorphisms. If $\tau$ denotes the Lee form of $D^J$
with respect to the Levi-Civita connection $\n$ of $g$, we have for
every tangent vector $X$
\beq\label{d1}0=D^J_X=\n_XJ+\tilde\tau_XJ.\eeq From (\ref{tilde}) we
compute \bea (\tilde\tau_XJ)(Y)&=&\tau_X(JY)-J(\tau_XY)\\
&=&\tau(X)JY+\tau(JY)X-g(X,JY)\tau-J(\tau(X)Y+\tau(Y)X-c(X,Y)\tau)\\
&=&(X\wedge J\tau+JX\wedge\tau)(Y), \eea whence \beq\label{f}
\n_X\o=-(X\wedge J\tau+JX\wedge\tau) \eeq From (\ref{d1}) we get in a
local orthonormal basis $\{e_i\}$:
$$ d\o=\sum_i
e_i\wedge\n_{e_i}\o=-\sum_ie_i\wedge(e_i\wedge\tau+Je_i\wedge\tau)=
-2\o\wedge\tau.$$ Taking the exterior derivative in this last relation
yields
$$0=d^2\o=-2d\o\wedge\tau-2\omega\wedge d\tau=-2\omega\wedge d\tau.$$
For $m\ge 3$ the exterior product with $\o$ is injective, so
$d\tau=0$, and $(M,c,J)$ is thus locally conformally K\"ahler.

Conversely, assume first that $m\ge 3$ and $(M,c,J)$ is locally
  conformally K\"ahler. If $g_1$ and $g_2$ are local K\"ahler metrics
  on open sets $U_1$ and $U_2$ in the conformal class $c$, then $g_1$
  and $g_2$ are homothetic on $U_1\cap U_2$. The Levi-Civita
  connections of all such metrics thus define a global Weyl structure
  $D$ leaving $J$ parallel.

If $m=2$ and $J$ is integrable, let $g$ be any metric in the conformal
class $c$ with the associated 2-form $\o(X,Y):=g(JX,Y)$. Since the
wedge product with $\o$ defines an isomorphism $\L^1M\cong \L^3M$,
there exists a unique 1-form $\tau$ such that
$d\o=-2\o\wedge\tau$. From \cite{kn2}, Ch. 9, Proposition 4.2 we
obtain
\bea\n_X\o(Y,Z)&=&-(\o\wedge\tau)(X,JY,JZ)+(\o\wedge\tau)(X,Y,Z)\\
&=&-(JX\wedge\tau)(JY,JZ)+(JX\wedge\tau)(Y,Z)\\&=&(X\wedge
J\tau+JX\wedge\tau)(Y,Z) \eea (notice that in the definition of $d\o$
there is an extra factor 3 with the conventions in \cite{kn2}). By
(\ref{f}), this means that $J$ is parallel with respect to the Weyl
structure $\n+\tilde\tau$.

\r

Before stating our main result, we need one more preliminary statement
concerning conformal products.

\begin{prop}\label{restr}
A conformal product $M=M_1\times M_2$, with $\dim M_i=n_i\ge 1$,
admitting a non-trivial weightless form $\o\in
C^\infty\left(p_1^*(\L_0^k M_1)\right)$, $1\le k\le n_1-1$, which is
parallel with respect to the adapted Weyl structure, is a closed
conformal product.
\end{prop}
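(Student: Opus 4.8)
The plan is to descend $\omega$ to the first factor $M_1$ and there apply the uniqueness statement of Lemma~\ref{tens1}, exploiting that a conformal product comes equipped with a whole family of Weyl structures $D^y$ on $M_1$ (Lemma~\ref{rest}), each of which would be forced to make the descended form parallel. First I would show that $\omega=p_1^*\omega_1$ for some weightless $k$-form $\omega_1$ on $M_1$. Viewed on $M$, a section of $p_1^*(\Lambda^k_0 M_1)$ is precisely a weightless $k$-form on $M$ which is annihilated by interior multiplication with every vector in $H_2=\ker dp_1$; that is, $\omega$ is horizontal for the submersion $p_1$. Since $D$ is the adapted Weyl structure it preserves the splitting $TM=H_1\oplus H_2$, so $(\ker dp_1)^\perp=H_1$ is $D$-parallel, and $D\omega=0$ gives in particular $D_V\omega=0$ for every vertical vector field $V$. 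The converse part of Lemma~\ref{cs2} therefore applies and yields $\omega=p_1^*\omega_1$. Since $p_1$ is a submersion, $p_1^*$ is injective on forms, so $\omega_1$ is non-trivial; being parallel it is in fact nowhere vanishing, so Lemma~\ref{tens1} is applicable to it on $M_1$.

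Next I would invoke the restricted Weyl structures. By Lemma~\ref{rest}, for every $y\in M_2$ the slice $M_1\times\{y\}\simeq M_1$ carries a Weyl structure $D^y$ with $p_1^*(D^y_XT)=D_X(p_1^*T)$ at points $(x,y)$, for all tensor fields $T$ on $M_1$. Taking $T=\omega_1$ and using $D(p_1^*\omega_1)=D\omega=0$ together with the injectivity of $p_1^*$, we get $D^y\omega_1=0$ for every $y$. Hence $\omega_1$ is a non-trivial weightless $k$-form on $M_1$ with $1\le k\le n_1-1$ that is parallel for each of the Weyl structures $D^y$; by Lemma~\ref{tens1} there is at most one Weyl structure on $M_1$ with this property, so all the $D^y$ coincide.

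Finally, the last assertion of Lemma~\ref{rest} states that $D$ is closed if and only if all the $D^y$ coincide, so we conclude that the adapted Weyl structure $D$ is closed, i.e. $M=M_1\times M_2$ is a closed conformal product. The proof is essentially an assembly of the preliminary lemmas, so I do not anticipate a serious obstacle; the delicate point is the descent in the first step, where one has to make sure that the horizontality of $\omega$ together with the $D$-parallelism of $H_1=(\ker dp_1)^\perp$ are exactly the hypotheses needed for Lemma~\ref{cs2}, so that $\omega$ is genuinely the pull-back of a non-trivial form on $M_1$ and not merely a section of the pull-back bundle.
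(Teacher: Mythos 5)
Your proof is correct and follows essentially the same route as the paper's: descend $\omega$ to $M_1$ via the converse part of Lemma~\ref{cs2}, observe that the descended form is parallel for every restricted Weyl structure $D^y$ of Lemma~\ref{rest}, conclude from the uniqueness in Lemma~\ref{tens1} that all $D^y$ coincide, and hence that $D$ is closed. The only difference is that you spell out more carefully why Lemma~\ref{cs2} applies and why $\omega_1$ is nowhere vanishing, which the paper leaves implicit.
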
 
In other words, on a non-closed conformal product, the only weightless
forms of {\em pure type} $M_1$ or $M_2$ are the volume forms of the
factors.
\begin{proof} 
Let $M=M_1\times M_2$ and $TM=H_1\oplus H_2$ be the corresponding
orthogonal splitting of the tangent space $H_i\simeq p_i^*TM_i$, where
$p_i:M\ra M_i$ are the canonical projections. Let $D$ be the adapted
Weyl structure on $M$.
Since $D_X\o=0,\ \forall\; X\in H_2$, Lemma \ref{cs2} shows that $\o$
is the pull-back of a weightless form
$\o_1\in\C^\infty\left(\L^k_0M_1\right)$, hence
$$\o=p_1^*\o_1.$$

From Lemma \ref{rest}, $D$ induces a Weyl structure $D^y$ on each
slice $M_1\times \{y\}$ and the equation $D_Y \o=0, \ \forall\; Y\!
\in H_1,$ shows that the restriction of $\o$ to each slice $M_1\times
\{y\}$ is $D^y$-parallel. Of course, all these restrictions coincide
with $\o_1$, so we have
$$D^y_X\o_1=0,\ \forall X\in TM_1\mbox{ and }\forall y\in M_2.$$ Now,
as the degree of $\o_1$ lies between $1$ and $n_1-1$, the equation
above implies (using Lemma \ref{tens1}) that all Weyl structures $D^y$
coincide, so by Lemma \ref{rest} again, the Weyl structure $D$ is
closed and $(M,c)$ is thus a closed conformal product.

\r

We are now ready for the classification of conformal manifolds
carrying conformally parallel forms.

\begin{ath}\label{main}
Let $(M^n,c)$ be a conformal manifold and let $\o\in{\mathcal
C}^\infty(\L^k_0M)$ be a weightless $k$-form ($1\le k \le n-1$) such
that there exists a Weyl structure $D$ with respect to which $\o$ is
parallel. Then the following (non-exclusive) possibilities occur:
\begin{enumerate}
\item $D$ is closed, so Theorem \ref{riem} applies.
\item $M$ has dimension $4$, $k=2$, the endomorphism of $TM$
corresponding to $\o$ is, up to a constant factor, an integrable
complex structure, and $D$ is its canonical Weyl structure.
\item $(M^n,c)$ is a conformal product of $(M^{n_1},c_1)$ and
$(M^{n_2},c_2)$, $D$ is the adapted Weyl structure, and
\begin{enumerate}
\item if $n_1\ne n_2$, $\o=\l \o_i$ for some $\l\in\RM$, where $\o_i$
denotes the weightless volume form of the factor $M_i$;
\item if $n_1=n_2$, then $\o=\l \o_1+\mu \o_2$ for some
$\l,\mu\in\RM$.
\end{enumerate}\end{enumerate}
\end{ath}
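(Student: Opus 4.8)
The plan is to combine the three preliminary results (Theorems \ref{riem}, Lemma \ref{lck}, and Proposition \ref{restr}) with the Merkulov--Schwachh\"ofer classification, reducing everything to an analysis of the holonomy of the minimal Weyl structure $D^\o$. First I would observe that by Lemma \ref{tens1}, the Weyl structure with respect to which $\o$ is parallel must be $D^\o$; thus the whole problem is about $D^\o$ and its holonomy representation $\Hol(D^\o)\subset\CO_n$. If $D^\o$ is closed we land in case (1) and invoke Theorem \ref{riem}, so from now on assume $D^\o$ is \emph{non-closed}. The key dichotomy is whether $\Hol(D^\o)$ acts irreducibly or reducibly on $\R^n$.

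\emph{Irreducible case.} Here I would apply the Merkulov--Schwachh\"ofer list \cite{SM} of torsion-free affine connections with irreducible holonomy to $D^\o$ (which is a torsion-free connection on $P(\CO_n)$, hence in particular an affine connection whose holonomy is contained in $\CO_n$). Inspecting the tables, the only entries whose holonomy group is a subgroup of $\CO_n$ but \emph{not} of $\SO_n\times\{1\}$ (the latter forcing closedness, since a reduction to $\SO_n\times\{1\}$ gives a local parallel metric) are: the generic case $\Hol(D^\o)=\CO_n$, which admits no nonzero invariant $k$-form for $1\le k\le n-1$ and is therefore excluded by the existence of $\o$; and, in dimension $4$, the Hermitian-type entries with holonomy $\CO_2\cdot\mathrm{SO}_2$ or similar conformal-unitary groups. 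In the latter situation the invariant $2$-form $\o$ corresponds (up to scale) to an almost complex structure $J$ compatible with $c$, and Lemma \ref{lck} (with $m=2$) tells us $J$ is integrable and $D^\o$ is its canonical Weyl structure: this is exactly case (2). One should also rule out $k\ne 2$ and higher-dimensional compact holonomy: any compact (reduced) holonomy group stabilizes an inner product, hence defines local parallel metrics, so $D^\o$ would be closed, contradicting our assumption.

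\emph{Reducible case.} If $\Hol(D^\o)$ acts reducibly, Theorem \ref{red} applies: $(M,c)$ is locally a conformal product $M_1^{n_1}\times M_2^{n_2}$ with $D^\o$ the adapted Weyl structure and invariant splitting $TM=H_1\oplus H_2$, $H_i\simeq p_i^*TM_i$. Since $D^\o$ is assumed non-closed, the conformal product is non-closed. Now I would decompose $\o$ according to the bigrading $\L^k_0M=\bigoplus_{a+b=k}\L^a_0H_1^*\otimes\L^b_0H_2^*$ induced by the splitting; since the splitting is $D^\o$-parallel, so is each bihomogeneous component of $\o$, and each such component is itself a $D^\o$-parallel weightless form. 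Using the curvature-asymmetry formula \eqref{fa} together with Lemma \ref{mix} (the Faraday form $F$ of the adapted Weyl structure is of mixed type), the standard argument shows that a $D^\o$-parallel form of mixed bidegree $(a,b)$ with $a,b\ge 1$ forces $F$ to vanish on mixed pairs as well, hence $F=0$ and $D^\o$ closed --- a contradiction. Therefore $\o$ has pure type, say type $M_1$, and Proposition \ref{restr} then forces $\o$ to be a constant multiple of the weightless volume form $\o_1$ of $M_1$ (otherwise $D^\o$ would be closed). If $n_1\ne n_2$, the volume forms $\o_1,\o_2$ have different degrees, so $\o$ is a multiple of a single $\o_i$: this is case (3a). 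If $n_1=n_2$, both $\o_1$ and $\o_2$ have degree $n_1$ and any $D^\o$-parallel $n_1$-form is a linear combination $\l\o_1+\mu\o_2$: this is case (3b). (In the borderline dimensions one also checks that $2$-dimensional factors do not produce new invariant forms beyond what is listed, since $\L^0$ and $\L^2$ of a surface are spanned by constants and the volume form.)

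\emph{Main obstacle.} The delicate point is the irreducible case: one must carefully read off from the Merkulov--Schwachh\"ofer tables exactly which entries have holonomy inside $\CO_n$ without being inside $\SO_n$, and verify that each such entry either has no invariant $k$-form (the generic conformal holonomy) or is the $4$-dimensional Hermitian case --- this is where the dimension restriction in (2) genuinely comes from, and it requires matching their Lie-algebraic data against the condition ``stabilizes a nonzero $k$-vector.'' The reducible case, by contrast, is mostly bookkeeping once Proposition \ref{restr} and the mixed-type vanishing argument are in hand.
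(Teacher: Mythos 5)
Your overall architecture (reduce to $D^\o$, split on irreducible versus reducible holonomy, invoke Merkulov--Schwachh\"ofer for the irreducible case and Theorem \ref{red} plus the bigrading of $\L^k_0M$ for the reducible case) matches the paper's. But there is a genuine gap at the point you dismiss as ``the standard argument'': the claim that a $D$-parallel form of mixed bidegree $(k_1,k_2)$ with $k_1,k_2\ge 1$ forces $F=0$ is exactly the hard part of the proof, and the naive curvature argument does \emph{not} deliver it. What one actually gets from $R^D(X,A)\o=0$, the asymmetry formula \eqref{fa} and the identity $F\wedge(U\i\o)=-F(U)\wedge\o$ (which itself has to be derived from $d\o=-p\,\theta\wedge\o$) is the system
\begin{equation*}
(1-k_1)F(A)\wedge\o=0,\quad (1-k_2)F(X)\wedge\o=0,\quad
(n_1-k_1-1)F(A)\i\o=0,\quad (n_2-k_2-1)F(X)\i\o=0,
\end{equation*}
which yields $F=0$ only when $2\le k_1\le n_1-2$ or $2\le k_2\le n_2-2$. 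The four exceptional bidegrees $(1,1)$, $(1,n_2-1)$, $(n_1-1,1)$, $(n_1-1,n_2-1)$ survive this computation, and the paper needs a further argument for them: the partial Hodge operators $*_1,*_2$ (which are $D$-parallel) reduce everything to a mixed parallel $2$-form of bidegree $(1,1)$, and then in dimension $\ge 5$ the eigenvalue analysis of $J^2$ shows $D$ is closed unless $\o=\eta_1\wedge\eta_2$ is decomposable, in which case the $D$-parallel $1$-forms $\eta_i\subset H_i^*$ feed back into Proposition \ref{restr} and again force closedness. None of this is in your proposal, and without it the reducible case is not established.

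Two smaller points. First, your identification of the ``main obstacle'' is misplaced: the irreducible case is in fact the easy one (the paper disposes of it in a few lines once Merkulov--Schwachh\"ofer reduces it to $n=4$, full holonomy $\CO^+(n)$, closed, or reducible; the $n=4$ cases $k=1,2,3$ are then handled by the direct low-degree analysis and Lemma \ref{lck}, not by reading Lie-algebraic data from the tables). Second, you should handle degrees $k=1$ and $k=2$ up front for all $n$, as the paper does: the degree-$2$ analysis via the parallel symmetric endomorphism $J^2$ and its eigenspace decomposition is not a corollary of the holonomy dichotomy but a separate ingredient that the exceptional-bidegree argument above relies on.
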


\begin{proof} Let us first consider the case of forms of low
degree. If $\o$ is a $D$-parallel weightless 1-form, its kernel
defines a $D$-parallel distribution of codimension 1, so by Theorem
\ref{red} $(M,c)$ is a conformal product where one factor is
one-dimensional and $\o$ is its weightless volume form (case 3a).

The case when $\o$ has degree 2 has a special geometrical meaning,
since it can be seen as a skew-symmetric endomorphism $J$ of $TM$.  As
such, its square is a parallel symmetric endomorphism, therefore its
eigenvalues are constant (and non-positive) and the corresponding
eigenspaces are parallel. There are two cases to be considered.

If $J^2$ has only one eigenvalue, one may assume after rescaling that
$J^2=-\Id_{TM}$, so by Lemma \ref{lck}, either $n=4$ and we are in
case 2, or $n\ge 6$, $(M,c)$ is locally conformally K\"ahler, and $D$
is closed (case 1).

If $J^2$ has at least two eigenvalues, we denote by $H_1$ one of the
eigenspaces, and by $H_2$ its orthogonal complement in $TM$. The
splitting $TM=H_1\oplus H_2$ is thus $D$-parallel, so $M$ has to be a
non-trivial conformal product by Theorem \ref{red}. On the other hand,
$J$ splits into $J=J_1+J_2$, where $J_1,J_2$ are the restrictions of
$J$ to $H_1$, resp. $H_2$. The form $\o$ splits accordingly into
$\o=\o_1+\o_2$, with $\o_i\in{\mathcal
C}^\infty\left(p^*_i(\L^2_0M_i)\right)$. By Proposition \ref{restr},
either the conformal product $M_1\times M_2$ is closed (case 1), or
the non-trivial $\o_i$ is a pull-back of a weightless volume form on
$M_i$. Therefore, if $D$ is non-closed, $J^2$ has exactly two
eigenvalues, which are either both non-zero (then $n=4$ and we are in
case 3b) or only one is non-zero, and we are in case 3a. Note that in
the latter case, $\o$ is defined as the pull-back of a volume form on
a 2-dimensional conformal factor, thus it is decomposable.

In order to proceed, we make use of Merkulov-Schwachh\"ofer's
classification of torsion-free connections with irreducible holonomy
\cite{SM}. Their result, in the particular case of Weyl structures,
states that there are four possibilities: Either $n=4$, or $D$ has
full holonomy $CO^+(n)$, or $D$ is closed, or $D$ has reducible
holonomy.

If $n=4$, the case where the degree of $\o$ is 1 or 2 has already been
considered, and if $\o$ has degree 3, its Hodge dual is again a
$D$-parallel weightless 1-form, so we are in case 3a.

If $D$ has full holonomy $CO^+(n)$, there is of course no $D$-parallel
weightless $k$-form on $M$ for $1\le k \le n-1$.

If $D$ is closed we are already in case 1.

For the rest of the proof, we thus may assume that the holonomy of $D$
acts reducibly on $TM$ and $\dim M\ge 5$.  By Theorem \ref{red},
$(M^n,c)$ is locally a conformal product of $(M^{n_1},c_1)$ and
$(M^{n_2},c_2)$ and $D$ is the adapted Weyl structure. From Lemma
\ref{cs2} we have the following $D$-parallel decomposition:
\beq\label{dec}\L^k_0M\simeq \bigoplus_{k_1+k_2=k}
p_1^*(\L^{k_1}_0M_1)\otimes p_2^*(\L^{k_2}_0M_2).\eeq If for every
$k_1\ge 1,\ k_2\ge 1$ the components of $\o$ in
$p_1^*(\L^{k_1}_0M_1)\otimes p_2^*(\L^{k_2}_0M_2)$ vanish, then
$\o=\o_1+\o_2$, with $\o_i\in C^\infty\left(p_i^*(\L^k_0M_i)\right)$,
and $\o_1,\o_2$ are both parallel. Proposition \ref{restr} then
implies that either $D$ is closed (case 1) or $\o_i$ are both
pull-backs of weightless volume forms on the factors (not both
trivial). But this can only happen if $k$ is equal to one of the
dimensions $n_1,n_2$ (case 3a) or to both of them (case 3b).

To deal with the cases when $\omega$ is not a (combination of) pure
type form, we may assume without loss of generality that $\o$ is a
non-trivial section of $p_1^*(\L^{k_1}_0M_1)\otimes
p_2^*(\L^{k_2}_0M_2)$ with $k_1\ge 1,\ k_2\ge 1$.  By considering the
Hodge dual of $\o$ (which is $D$-parallel as well), we may even assume
$k_1<n_1$ and $k_2<n_2$.  We will show that, in this case, $D$ must be
closed (case 1).

Let $R^D$ denote the curvature tensor of $D$. If $X\in TM_1$ and $A\in
TM_2$, we have $R^D(\.,\.,X,A)=0$. Since $D\o=0$ we also have
$R^D(X,A)(\o)=0$.  From (\ref{fa}) we thus get
$$R^D(X,A,\cdot,\cdot)=(F(X)\wedge A-F(A)\wedge X)(\cdot,\cdot)
+F(X,A)c(\cdot,\cdot),$$ and note that the first part is a
skew-symmetric endomorphism of $TM$, and the second term is a multiple
of the identity. That last one acts trivially on weightless forms, so
we are left with four terms in $R^D(X,A)(\o)$ and get: \be\label{ff}
F(A)\wedge(X\i\o)-X\wedge(F(A)\i\o)-
F(X)\wedge(A\i\o)+A\wedge(F(X)\i\o)=0, \ee for all $X\in TM_1$ and
$A\in TM_2$.

We now choose a metric $g\in c$ and identify the weightless form $\o$
with the corresponding $p$-form of constant $g$-length. If $\theta$
denotes the Lee form of $D$ with respect to $g$ and $\n$ the
Levi-Civita covariant derivative of $(M,g)$, we have \be
\n_U\o=-\tilde\theta_U\o=\theta\wedge(U\i\o)- U\wedge(\theta\i\o).
\ee By contraction we easily get $d\o=-p\theta\wedge\o$. Taking the
exterior derivative in this relation yields \be \label{th}
d\theta\wedge\o=0.  \ee Since $d\theta=F$, taking the interior product
with some vector in (\ref{th}) yields \be\label{u} F\wedge
(U\i\o)=-F(U)\wedge\o,\qquad\forall\, U\in TM.  \ee

The rest of the proof is purely algebraic. Let $X_i$, $A_j$ be local
orthonormal basis of $H_1:=TM_1$ and $H_2:=TM_2$. By Lemma \ref{mix}
we have
$$F=\sum_{i,j}f_{ij} X_i\wedge A_j,$$ therefore $F=\sum X_i\wedge
F(X_i)=\sum A_j\wedge F(A_j)$.  We also introduce the notations
$\phi=\sum F(X_i)\wedge (X_i\i\o)= -\sum A_j\wedge (F(A_j)\i\o)$ and
$\psi=\sum F(A_j)\wedge (A_j\i\o)= -\sum X_i\wedge (F(X_i)\i\o)$.

For every $\alpha\in p_1^*(\L^{p}_0M_1)\otimes p_2^*(\L^{q}_0M_2)$ we
have $\sum X_i\wedge (X_i\i\alpha)=p\alpha$ and $\sum A_j\wedge
(A_j\i\alpha)=q\alpha$. Taking the wedge product with $X$ in
(\ref{ff}), summing over $X=X_i$ and using (\ref{u}) yields
\be\label{ff1} 0=-k_1 F(A)\wedge\o- F\wedge(A\i\o)+A\wedge\psi=(1-k_1)
F(A)\wedge\o+A\wedge\psi.  \ee One last contraction with $A_i$ in
(\ref{ff1}) gives $(k_1+n_2-k_2)\psi=0$ (note that $\psi\in
p_1^*(\L^{k_1+1}_0M_1)\otimes p_2^*(\L^{k_2-1}_0M_2)$). Plugging back
into (\ref{ff1}) yields \be\label{e1} (1-k_1)F(A)\wedge\o=0,\qquad
\forall\, A\in TM_2.  \ee In a similar way one obtains \be\label{e2}
(1-k_2)F(X)\wedge\o=0,\qquad \forall\, X\in TM_1, \ee and, by
replacing $\o$ with its Hodge dual, and taking the Hodge dual of the
equations for $*\o$ analogous to (\ref{e1}) and (\ref{e2}), we also
get \be\label{e3} (n_1-k_1-1)F(A)\i\o=0,\qquad \forall\, A\in TM_2,
\ee \be\label{e4} (n_2-k_2-1)F(X)\i\o=0,\qquad \forall\, X\in TM_1.
\ee

If $2\le k_1\le n_1-2$, equations \eqref{e1} and \eqref{e3} show that
$F=0$.  Similarly, if $2\le k_2\le n_2-2$, equations \eqref{e2} and
\eqref{e4} show that $F=0$. We are thus left with four cases:
\beq\label{cases}(k_1,k_2)\in\{(1,1),(1,n_2-1),(n_1-1,1),(n_1-1,n_2-1)\}.\eeq
Now, the pull-back to $M$ of the Hodge operator of $M_1$ defines an
operator on $\L^*_0M$, which maps each component
$p_1^*(\L^{k_1}_0M_1)\otimes p_2^*(\L^{k_2}_0M_2)$ in the
decomposition \eqref{dec} isomorphically onto
$p_1^*(\L^{n_1-k_1}_0M_1)\otimes p_2^*(\L^{k_2}_0M_2)$ by
$$*_1[p_1^*(\o_1)\wedge p_2^*(\o_2)]:=p_1^*(*\o_1)\wedge
p_2^*(\o_2).$$ One defines $*_2$ in a similar way and Lemma \ref{cs2}
shows that $*_1$ and $*_2$ are $D$-parallel. Using these ``partial
Hodge operators", it suffices to study only the first case in
\eqref{cases}, {\em i.e.} we can assume that $\o$ is a $D$-parallel 2-form
in $p_1^*(\L^{1}_0M_1)\otimes p_2^*(\L^{1}_0M_2)$.
 
As $n\ge 5$, by looking at the case of $D$-parallel 2-forms treated
above, we see that $D$ is closed unless $\o$ is decomposable. But this
would imply that $\o=\eta_1\wedge \eta_2$, where each of the
weightless 1-forms $\eta_i$ define a $D$-parallel distribution
included in $H_i$. The forms $\eta_i$ are thus $D$-parallel and Lemma
\ref{restr} implies that $D$ is closed.

\end{proof}

Looking back to the proof of Theorem \ref{main}, we see how different
the non-closed case is from the case of a closed Weyl structure, and
this despite the fact that the results are essentially similar: as in
the classical Riemannian case, a weightless form which is parallel for
a Weyl structure either defines a special, irreducible, holonomy, or
the manifold is locally a product and the form is a linear combination
of pull-backs of the volume forms of the factors (in the Riemannian
case, other pull-backs may occur if the factors have reduced
holonomy).

But while in Riemannian geometry the richer case is the one with
irreducible, non-generic, holonomy -- and these {\em special
geometries}, despite extensive research in the last decades, are far
from being completely understood --, in non-closed Weyl geometry this
situation occurs only in dimension 4, and there it defines a rather
simple structure. It appears that for {\em non-closed} Weyl structures
it is the case with {\em reduced} holonomy which is more interesting,
and the reason is that the holonomy group -- although defining a local
product structure on the manifold -- is not itself a product like in
the Riemannian situation.

The following consequence of the Theorem \ref{main} sheds some light
on the reduced holonomy group of a non-closed Weyl structure:

\begin{ecor}\label{holo}
Let $(M_1^{n_1},c_1)$ and $(M_2^{n_2},c_2)$ be two conformal manifolds
and let $c$ be a non-closed conformal product structure on
$M=M_1\times M_2$ with adapted Weyl structure $D$. If the dimension
$n$ is neither $2$ nor $4$, then the only $D$-parallel 
distributions on $M$ are 
the kernels $H_2$ and $H_1$ of the canonical projections of $M$ on
$M_1$, respectively $M_2$.
\end{ecor}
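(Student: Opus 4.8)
The plan is to translate a $D$-parallel distribution into a $D$-parallel weightless form and then feed it into Theorem~\ref{main}. Let $E\subset TM$ be a $D$-parallel distribution; leaving aside the trivial cases $E=0$ and $E=TM$, set $r:=\mathrm{rk}\,E$ with $1\le r\le n-1$. Since $c$ is $D$-parallel, so is the orthogonal complement $E^\perp$, and hence also the subbundle of $T^*M$ of $1$-forms annihilating $E^\perp$. Choosing locally an orientation of $E$, transported by $D$, I take $\o_E\in C^{\infty}(\L^r_0M)$ to be the weightless volume form of $(E,c|_E)$. It is a $D$-parallel, decomposable section of $\L^r_0M$ of unit conformal length, and it satisfies $X\i\o_E=0$ exactly when $X\in E^\perp$; in particular $\o_E$ determines $E$, since $E=(\ker\o_E)^\perp$.

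I then apply Theorem~\ref{main} to $\o=\o_E$, $k=r$. Since $D$ is non-closed, possibility~(1) is excluded, and since $n\ne 4$, possibility~(2) is excluded; hence we are in case~(3): $(M,c)$ is a conformal product with adapted Weyl structure $D$. By the uniqueness clause of Theorem~\ref{red}, a reducible Weyl structure determines its conformal product decomposition, so this decomposition is (locally) the given one $M=M_1\times M_2$. Therefore $\o_E=\l\o_1+\mu\o_2$ for some $\l,\mu\in\RM$, with $\mu=0$ whenever $n_1\ne n_2$, where $\o_i$ is the weightless volume form of $M_i$.

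It remains to observe that, $\o_E$ being decomposable and $n\ne2,4$, at most one of $\l,\mu$ is nonzero. Indeed, if both were nonzero then $n_1=n_2=:m$, so $m\ge 3$; writing $X\in TM$ as $X=X_1+X_2$ with $X_i\in H_i$, one has $X\i\o_1=X_1\i\o_1\in C^{\infty}(p_1^*(\L^{m-1}_0M_1))$ and $X\i\o_2=X_2\i\o_2\in C^{\infty}(p_2^*(\L^{m-1}_0M_2))$, and these sit in complementary summands of $\L^{m-1}_0M$ (note $m-1\ge 2$). Hence $X\i\o_E=0$ forces $X_1\i\o_1=0$ and $X_2\i\o_2=0$, and since each $\o_i$ restricts to a non-vanishing top form on $H_i$ this gives $X_1=X_2=0$; so $\ker\o_E=0$, contradicting $\mathrm{rk}(\ker\o_E)=\mathrm{rk}(E^\perp)=n-r\ge1$. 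Thus $\o_E$ is a nonzero multiple of $\o_1$ or of $\o_2$: in the first case $E^\perp=\ker\o_E=\ker\o_1=H_2$, so $E=H_1$; in the second, $E=H_2$. Hence the only $D$-parallel distributions on $M$ other than the trivial ones $0$ and $TM$ are $H_1$ and $H_2$.

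The step that needs care is the identification, inside case~(3) of Theorem~\ref{main}, of the conformal product it produces with the given one $M_1\times M_2$ — equivalently, uniqueness of the $D$-invariant orthogonal splitting of $TM$ — and this is exactly where $n\ne2,4$ enters: in dimensions $2$ and $4$ conformal multi-products occur (Section~6), i.e.\ several such splittings, whereas for $n\ge5$ the proof of Theorem~\ref{main} shows that a $D$-parallel weightless form has no nonzero mixed component relative to a $D$-invariant splitting, which forces the splitting to be unique. If one would rather not lean on the uniqueness clause of Theorem~\ref{red}, one can decompose $\o_E$ directly via \eqref{dec} for the given splitting, use that same vanishing of mixed components together with Proposition~\ref{restr} to reduce to $\o_E$ being a multiple of $\o_1$ or of $\o_2$, and then conclude as above.
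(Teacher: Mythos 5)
Your overall strategy (pass from the distribution $E$ to its weightless volume form $\o_E$, show it must be a multiple of $\o_1$ or $\o_2$, and rule out a genuine combination $\l\o_1+\mu\o_2$ by a decomposability argument) is the paper's strategy for $n\ge 5$, and your final step is correct: your contraction argument ($X\i\o_E=0$ forces $X=0$ when $\l,\mu\ne 0$ and $n_1=n_2\ge 2$, contradicting $\ker\o_E=E^\perp\ne 0$) is a clean variant of the paper's, which instead wedges $\o=\l\o_1+\mu\o_2$ with $c(X,\cdot)$ for $X\in E$ and compares bidegrees. However, the pivotal identification step is not sound as written. Theorem \ref{red} has no uniqueness clause in the direction you need: it asserts that a conformal product determines a unique adapted Weyl structure and that a Weyl structure with a chosen invariant splitting yields a conformal product, but it does \emph{not} assert that a reducible Weyl structure admits only one invariant splitting --- that is precisely the content of Corollary \ref{holo}, and it is false in dimensions $2$ and $4$ (multi-products). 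So "by the uniqueness clause of Theorem \ref{red}, this decomposition is the given one" is circular. You flag this yourself, and your proposed repair (decompose $\o_E$ via \eqref{dec} with respect to the \emph{given} splitting, kill the mixed components by the algebraic part of the proof of Theorem \ref{main}, and apply Proposition \ref{restr} to the pure components) is exactly what the paper does --- but that algebraic argument, and your own phrasing of the repair, require $n\ge 5$.

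This leaves the case $n=3$ genuinely unproved, and it is not vacuous: the corollary's hypothesis is only $n\ne 2,4$. The paper treats $n=3$ by a separate argument: if $E$ is a $D$-parallel line field not equal to the one-dimensional factor, one obtains three mutually orthogonal $D$-parallel line fields $L_1,L_2,L_3$, and applying Lemma \ref{mix} to each of the three splittings $L_i\oplus L_i^\perp$ (for each of which $D$ is the adapted structure) forces $F=0$, contradicting non-closedness. Your route can also be completed for $n=3$, but it needs to be said: reduce to $\mathrm{rk}\,E=1$ by passing to $E^\perp$ (or to $*\o_E$), note that a weightless $1$-form has no mixed components in \eqref{dec}, so $\o_E=\alpha_1+\alpha_2$ with $\alpha_i\in C^\infty(p_i^*(\L^1_0M_i))$ both $D$-parallel, and then Proposition \ref{restr} (with $n_2=2$, $k=1$) forces $\alpha_2=0$ since $D$ is non-closed, whence $\o_E$ is a constant multiple of $\o_1$ and $E=H_1$. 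Without this (or the paper's Lemma \ref{mix} argument), the proof is incomplete.
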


\begin{proof}
Let $\o_i$ denote the ($D$-parallel) weightless volume form of $H_i$.
If $H$ is a $D$-parallel distribution, its weightless volume form $\o$
is $D$-parallel. Consider first the case $n=3$: We can assume
$\o_1$ and $\o$ are weightless 1-forms, and suppose they are not
proportional (otherwise $H=H_1$). As the restriction of $\o$ to $H_1$
is $D$-parallel as well, we can assume that there is a $D$-parallel
orthogonal basis of 1-forms on $M$. We denote by $L_i$, $i=1,2,3$, the
three $D$-parallel, mutually orthogonal line distributions. By
considering all three possible decompositions of $TM$ as $L_i\oplus
L_i^\perp$, we see that Lemma \ref{mix} implies that $F=0$, hence $D$
is closed. 

Suppose now that $D$ is non-closed and $\dim(M)> 4$. The proof of Theorem
\ref{main} shows that $\o$ is either proportional to $\o_1$ or $\o_2$
(in which case $H$ is equal to $H_1$ or $H_2$), or $n_1=n_2$ and
$\o=\l\o_1+\mu\o_2$ for some $\l,\mu\in\RM^*$. We claim that this latter
case is impossible. Let $X$ be some vector field in $H$ whose
projections $X_i$ onto $H_i$ are both non-vanishing (such a vector
field exists locally because $H$ is not equal to $H_1$ or $H_2$), and
let $\s=c(X,.)$ be the dual 1-form of weight $1$, which decomposes
correspondingly as $\s=\s_1+\s_2$. We clearly have $\s\wedge\o=0$,
whereas
$$\s\wedge(\l\o_1+\mu\o_2)=\l\s_2\wedge\o_1+\mu\s_1\wedge\o_2$$ and
the two terms on the right hand side are non-vanishing and have
bi-degree $(n_1,1)$ and $(1,n_2)$ with respect to the decomposition
\eqref{dec}. The assumption $n_1+n_2\ne 2$ shows that their sum can not
vanish, a contradiction which proves our claim.

\r

\obs The proposition above establishes a 1--1 correspondence between
the non-closed Weyl structures $D$ on $M$ with reducible holonomy and
conformal product structures, defined by a pair of complementary
$D$-invariant distributions, provided $\dim M\ne 2,4$. Note that the
restriction on the dimension of $M$ is necessary. Indeed, in 
dimension 2, any conformal product of two 1-dimensional manifolds
admits parallel lines (distributions of rank 1) in any directions:
Simply consider linear combinations of the two volume forms on the
factors. But not all such conformal products are closed. 

In dimension 4, there exist local examples of non-closed conformal
{\em multi-products}, {\em i.e.}, conformal manifolds admitting a non-closed
Weyl structure that leaves invariant more than one pair of orthogonal
distributions. These examples are described in Proposition \ref{bh}
below.\eobs

\section{Examples and applications to Einstein-Weyl geometry}

\subsection{Curvature of a conformal product}

Let $(M_1,g_1)$ and $(M_2,g_2)$ be Riemannian manifolds of dimensions
$n_1$, resp. $n_2$, with $n:=n_1+n_2$, and let $f_1,f_2:M\ra\R$ be
$C^\infty$ functions, where $M:=M_1\times M_2$. Consider the following
metric on $M$: 
$$g:=e^{f_1}g_1+e^{f_2}g_2.$$
Then $(M,[g])$ is a conformal product of the conformal manifolds
$(M_1,[g_1])$ and $(M_2,[g_2])$, in fact, any conformal product is
locally of this form.  
Moreover, for two couples $(f_1,f_2)$,
resp. $(f_1',f_2')$, the resulting conformal structures are equal if
and only if $f_1-f_2=f'_1-f'_2$. 

$(M,c)$ is a closed conformal product if and only if it
can be locally expressed in the form above, each $f_i$ being a
pull-back of a function on $M_i$.

From Remark \ref{obs} (see also Lemma \ref{cs2}), the adapted Weyl
structure $D$ of the conformal product $M=M_1\times M_2$ satisfies the
following: The metric $e^{f_1-f_2}g_1+g_2$ is $D$-parallel in the
$H_1$-directions, and $g_1+e^{f_2-f_1}g_2$ is $D$-parallel in the
$H_2$-directions. This implies (in the sequel, $X_i,Y_i,Z_i$ are
vector 
fields on $M_i$, equally considered as vector fields on the
corresponding leaves on $M$): 
\bi
\item $D_{X_1}X_2=D_{X_2}X_1=0$.
\item
For $i=1,2$, $D_{X_i}Y_i$ coincides with the Levi-Civita connection of
the metric $e^{\varepsilon(i)(f_1-f_2)}g_i$, where $\varepsilon(1):=1$
and $\varepsilon(2):=-1$.
\item the Lee form of $D$ with respect to $g$ is given by
$\theta(X)=-X_1(f_2)-X_2(f_1)$, where $X_i$ denote the components of
$X$ with respect to the decomposition $TM=TM_1\oplus TM_2$.
\ei

As before, we denote by $F$ the Faraday form of $D$, which by Lemma
\ref{mix} satisfies
$$F(X_i,Y_i)=0,\qquad \forall X_i,Y_i\in TM_i,$$
so it is obtained by extending a section $F_0$ of $H_1^*\otimes H_2^*$
to a skew-symmetric bilinear form on $TM=H_1\oplus H_2$. 

$F_0$ can also be extended to a {\em symmetric} bilinear form $\hat F$
on $TM$:  
$$\hat F(X_i,Y_i):=0;\ \hat F(X_1,X_2):= F(X_1,X_2);\ \hat
F(X_2,X_1):=-F(X_2,X_1),\ \forall X_i,Y_i\in TM_i.$$  

\begin{elem} Let $M:=M_1\times M_2$ be endowed with a conformal
  product structure $c:=[e^{f_1}g_1+e^{f_2}g_2].$ For $i=1,2$ denote by
  $\Ric^i$ the Ricci tensor of the metric
  $e^{\varepsilon(i)(f_1-f_2)}g_i$, viewed as a symmetric bilinear
  form on $TM_i$. Then the Ricci tensor of the adapted
  Weyl structure $D$ is given by 
\beq\label{ric}
\Ric^D=\Ric^1\oplus \Ric^2 + \frac{2-n}{2}F+\frac{n_1-n_2}{2}\hat F.
\eeq
\end{elem}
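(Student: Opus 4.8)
The plan is to compute the Ricci tensor of $D$ directly from the description of $D$ given just above the statement, by decomposing each pair of arguments according to $TM = TM_1\oplus TM_2$ and treating the three cases $(X_1,Y_1)$, $(X_2,Y_2)$, $(X_1,X_2)$ separately. Recall that $\Ric^D(Y,Z) = \sum_a c(R^D_{e_a,Y}Z, e_a)$ for a $g$-orthonormal basis $\{e_a\}$ (with the appropriate weight normalization coming from $g$); here we split the basis as $\{X_i\}\cup\{A_j\}$ adapted to $H_1\oplus H_2$. The key structural input is that $D_{X_1}X_2 = D_{X_2}X_1 = 0$, so the only curvature operators that can be nonzero are $R^D_{X_1,Y_1}$, $R^D_{A_1,A_2}$ and $R^D_{X_1,Y_1}$ restricted to each factor; in particular $R^D_{X_1,X_2}$ is already computed in the proof of Theorem \ref{main} via \eqref{fa}, and mixed-mixed curvature terms are controlled by $F$.

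First I would handle the ``pure'' directions. For $Y,Z$ both tangent to $M_1$, the contribution to $\Ric^D(Y,Z)$ coming from summing over the $H_1$-basis $\{X_i\}$ is, by the second bullet above, exactly the Ricci tensor of the Levi-Civita connection of $e^{f_1-f_2}g_1$, i.e. $\Ric^1(Y,Z)$; the contribution from summing over the $H_2$-basis $\{A_j\}$ is $\sum_j c(R^D_{A_j,Y}Z, A_j)$, which I would evaluate using \eqref{fa} together with $R^D(\cdot,\cdot,Y,A_j) = 0$ (curvature of a mixed pair applied inside one factor) — this is the same computation that produced \eqref{dy} in the proof of Lemma \ref{rest}, and it yields a term built from $F$ and $\hat F$. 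Symmetrically for $Y,Z$ both tangent to $M_2$. For the mixed case $Y = X_1 \in TM_1$, $Z = A_2 \in TM_2$, only the mixed curvature operators contribute, and here I feed the formula for $R^D(X,A,\cdot,\cdot)$ from the proof of Theorem \ref{main} — namely $R^D(X,A,\cdot,\cdot) = (F(X)\wedge A - F(A)\wedge X)(\cdot,\cdot) + F(X,A)c(\cdot,\cdot)$ — into the trace. Collecting the coefficients of $F$ and $\hat F$ across all three cases should reproduce $\frac{2-n}{2}F + \frac{n_1-n_2}{2}\hat F$, the $n$ coming from tracing the $F(X,A)\Id$ piece over all $n$ directions and the $n_1-n_2$ from the asymmetry between how the $H_1$-sum and the $H_2$-sum see the skew versus symmetric extensions.

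The main obstacle I expect is bookkeeping the signs and the distinction between $F$ (skew) and $\hat F$ (symmetric) when contracting: the term $(F(X)\wedge A - F(A)\wedge X)$ contributes differently depending on whether the summation index runs through $H_1$ or $H_2$, and it is precisely this that separates the $\frac{2-n}{2}F$ piece (which treats $H_1$ and $H_2$ on equal footing) from the $\frac{n_1-n_2}{2}\hat F$ piece (which does not). I would organize this by writing $F = \sum_{i,j} f_{ij}\, X_i\wedge A_j$ as in the proof of Theorem \ref{main}, computing $F(X_i)$ and $F(A_j)$ explicitly, and then carefully tracking which contractions land on a diagonal-in-$H_1$ versus diagonal-in-$H_2$ versus off-diagonal entry. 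A secondary point to verify is that the weightless normalization is consistent: since $g = e^{f_1}g_1 + e^{f_2}g_2$ trivializes $L$, all the $c$-traces above are honest $g$-traces, and the metrics $e^{\varepsilon(i)(f_1-f_2)}g_i$ whose Ricci tensors appear are exactly the ones that $D$ restricts to in the $H_i$-directions (Remark \ref{obs}), so no extra conformal-rescaling correction terms enter beyond those already absorbed into $\Ric^i$. Once the three cases are assembled and the coefficients matched, \eqref{ric} follows.
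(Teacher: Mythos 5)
Your overall strategy is the paper's: compute $\Ric^D$ block by block with respect to $TM=H_1\oplus H_2$, identify the pure blocks with the Ricci tensors of the Levi--Civita connections of $e^{\varepsilon(i)(f_1-f_2)}g_i$, and feed the pair-symmetry defect \eqref{fa} into the trace for the mixed block. Two concrete points would derail the computation as you have set it up.

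First, the coefficient $\tfrac{2-n}{2}$ of $F$ is convention-dependent, and your starting formula $\Ric^D(Y,Z)=\sum_a c(R^D_{e_a,Y}Z,e_a)$ is not the definition the statement is written for. The paper uses Gauduchon's symmetrized definition \eqref{ric1}, namely $\Ric^D(X,Y)=\tfrac12\sum_k\bigl(g(R^D_{X,e_k}e_k,Y)-g(R^D_{X,e_k}Y,e_k)\bigr)$. Writing $R^D_{X,Y}=S_{X,Y}+F(X,Y)\Id$ with $S_{X,Y}$ skew-symmetric, one checks that the naive trace you propose equals the symmetrized one minus $F(Y,Z)$; the two coincide only for metric connections. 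Carried out literally, your computation would therefore produce $-\tfrac{n}{2}F$ in place of $\tfrac{2-n}{2}F$ (the symmetric part, which is what the Einstein--Weyl application uses, is unaffected, but the stated identity \eqref{ric} would be off by $F$). You must pin down the definition of $\Ric^D$ before the coefficient of $F$ means anything.

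Second, in the pure case $Y,Z\in TM_1$ the sum over the $H_2$-basis $\{A_j\}$ does not ``yield a term built from $F$ and $\hat F$'': it vanishes identically, because $R^D_{Z,T}$ preserves the splitting for all $Z,T$, so $R^D_{Y,A_j}A_j$ lies in $H_2$ and is orthogonal to $Z\in H_1$, while $R^D_{Y,A_j}Z$ lies in $H_1$ and is orthogonal to $A_j$. This is consistent with \eqref{ric}, since $F$ and $\hat F$ both vanish on $H_1\times H_1$; the computation \eqref{dy} you invoke concerns $R^D(U,X,Y,Z)$ with three $H_1$-slots and one $H_2$-slot, a component that never enters the Ricci trace. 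With these two corrections, your mixed-block computation via $R^D(X,A,\cdot,\cdot)=(F(X)\wedge A-F(A)\wedge X)(\cdot,\cdot)+F(X,A)c(\cdot,\cdot)$ gives $\Ric^D(X,A)=(1-n_2)F(X,A)$ and $\Ric^D(A,X)=(1-n_1)F(A,X)$ for $X\in TM_1$, $A\in TM_2$, exactly as in the paper, and these assemble into \eqref{ric}.
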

\begin{proof} Recall that the Ricci tensor of a Weyl structure $D$ 
is defined by
\beq\label{ric1}\Ric^D(X,Y):=\frac12\sum _{k=1}^n
(g(R^D_{X,e_k}e_k,Y)-g(R^D_{X,e_k}Y,e_k)),\eeq
where $g$ is an arbitrary metric in the conformal class and $\{e_k\}$ is a
local $g$-orthonormal frame (cf. \cite{g}, where, however, a
different sign convention for the curvature tensor is used). It is
straightforward to show that the skew-symmetric part of $\Ric^D$
equals $\frac{2-n}{2}F$ (this is actually an easy consequence of
\eqref{fa} and holds for any Weyl
structure). If $X,Y$ are vector fields on $M_1$, each summand in
\eqref{ric1} vanishes for $k\ge n_1+1$ since $R^D_{Z,T}$ preserves the
splitting $TM=TM_1\oplus TM_2$ for all $Z,T$. Moreover, $R^D$ is just
the Riemannian curvature of the metric $e^{f_1-f_2}g_1$ on vectors tangent to
$M_1$. This shows that $\Ric^D(X,Y)=\Ric^1(X,Y)$ whenever $X$ and $Y$
are tangent to $M_1$ (and similarly $\Ric^D(X,Y)=\Ric^2(X,Y)$ for $X$, $Y$
tangent to $M_2$). Finally, using \eqref{fa} we easily obtain
$\Ric^D(X,Y)=(1-n_2)F(X,Y)$ and $\Ric^D(Y,X)=(1-n_1)F(Y,X)$ for $X\in
TM_1$ and $Y\in TM_2$, which in turn implies \eqref{ric}.

\r

\subsection{Einstein-Weyl conformal products}

A Weyl manifold $(M,c,D)$ is called Ein\-stein-Weyl if the trace-free
{\em symmetric} part of the Ricci tensor $\Ric^D$ vanishes.
In this subsection we will give the local characterization of all
Einstein-Weyl structures $(M,c,D)$ in dimension 4 with reducible
holonomy. Of course, we will be mainly interested in {\em non-closed}
Weyl structures, the closed case being locally Riemannian, thus
well-understood. Note that the {\em scalar curvature} of a non-closed
Weyl structure, defined as usual as the trace of the Ricci tensor, is
a section in a weight bundle and therefore never (covariantly)
constant, unless it vanishes identically. The {\em scalar-flat}
Einstein-Weyl structures are thus a special class of Weyl structures,
which in our case provide examples of conformal products admitting
multiple reductions (see previous section and Proposition \ref{bh}). 

\begin{epr} \label{ans} 
A non-closed Weyl manifold $(M,c,D)$ of dimension $4$, with reduced
holonomy, is Einstein-Weyl if and only if it is locally isomorphic to
a conformal product $M_1\times M_2$, $c=[g_1+e ^{2f}g_2]$, where $M_1$
  and $M_2$ are open sets of $\RM^2$, $g_i$ is the flat metric on
  $M_i$ and the function $f:M_1\times M_2\subset \R^4\to \R$ satisfies
  the Toda-type equation
\beq\label{toda} e
^{2f}(\partial_{11}f+\partial_{22}f)+\partial_{33}f+\partial_{44}f=0.
\eeq
Moreover, the Einstein-Weyl structure is scalar flat if and only if $f$
is bi-harmonic ({\em i.e.} the restrictions of $f$ to $M_1\times \{x_2\}$ and
  to $\{x_1\}\times M_2$ are harmonic for all $x_i\in M_i$). 
\end{epr}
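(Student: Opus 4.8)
The strategy is to combine Theorem~\ref{red} with the Ricci formula~\eqref{ric} and reduce the Einstein-Weyl condition to a single scalar-curvature identity relating the two surface factors.

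Since $D$ is non-closed with reducible holonomy, Theorem~\ref{red} presents $(M,c)$ locally as a conformal product $M_1\times M_2$ whose adapted Weyl structure is $D$. In dimension $4$ the splitting $TM=H_1\oplus H_2$ has $n_1+n_2=1+3$ or $2+2$, and I would discard the $1+3$ case directly from~\eqref{ric}: the symmetric part of $\Ric^D$ is $\Ric^1\oplus\Ric^2+\tfrac{n_1-n_2}{2}\hat F$, and on a pair of vectors one lying in $H_1$ and one in $H_2$ it reduces to $\tfrac{n_1-n_2}{2}\hat F$, because $\Ric^1\oplus\Ric^2$ is block-diagonal. As every metric in $c$ is orthogonal for the splitting, the trace-free symmetric part of $\Ric^D$ can vanish only if $(n_1-n_2)\hat F=0$; when $n_1\neq n_2$ this gives $\hat F=0$, hence $F=0$, hence $D$ closed --- a contradiction. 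So $n_1=n_2=2$, and then~\eqref{ric} collapses to $\Ric^D=\Ric^1\oplus\Ric^2-F$, with symmetric part $\Ric^1\oplus\Ric^2$.

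Now I would put the product in normal form. Each $M_i$ being a surface, isothermal coordinates allow us to take $g_i$ flat on an open subset of $\R^2$, and the gauge freedom (only $f_1-f_2$ matters) in the presentation $c=[e^{f_1}g_1+e^{f_2}g_2]$ normalizes the conformal product to $c=[g_1+e^{2f}g_2]$. By the lemma that yields~\eqref{ric}, $\Ric^1$ is the Ricci tensor of the surface metric $\bar g_1:=e^{-2f}g_1$ on $M_1$ and $\Ric^2$ that of $\bar g_2:=e^{2f}g_2$ on $M_2$; being $2$-dimensional, $\Ric^i=\tfrac12 s_i\,\bar g_i$ with $s_i:=\mathrm{scal}(\bar g_i)$. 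Therefore $\Ric^1\oplus\Ric^2$ is a (function) multiple $\mu\,g$ of $g=g_1+e^{2f}g_2$, i.e.\ $D$ is Einstein-Weyl, exactly when $\tfrac12 s_1e^{-2f}=\mu=\tfrac12 s_2$, that is $s_1e^{-2f}=s_2$. Substituting the $2$-dimensional conformal change of scalar curvature of a flat metric, $\mathrm{scal}\big(e^{2\rho}g_{\mathrm{flat}}\big)=-2e^{-2\rho}\big(\partial_1^2+\partial_2^2\big)\rho$, with $\rho=-f$ on $M_1$ and $\rho=f$ on $M_2$, turns $s_1e^{-2f}=s_2$ into $e^{2f}(\partial_{11}f+\partial_{22}f)+\partial_{33}f+\partial_{44}f=0$, which is precisely~\eqref{toda}. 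This settles both implications at once: Einstein-Weyl is equivalent to~\eqref{toda}, and conversely any $f$ satisfying~\eqref{toda} gives a conformal product (hence a reduced-holonomy Weyl structure) that is Einstein-Weyl, the non-closedness amounting to $F\neq 0$, i.e.\ to $f$ not being a sum of a function of $(x_1,x_2)$ and one of $(x_3,x_4)$.

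For the scalar-flat statement, the scalar curvature of $D$ is $\tr_g\Ric^D=\tr_g(\mu\,g)=4\mu$, the Faraday term being trace-free, and under~\eqref{toda} one computes $\mu=\tfrac12 s_1e^{-2f}=\partial_{11}f+\partial_{22}f$ as well as $\mu=\tfrac12 s_2=-e^{-2f}(\partial_{33}f+\partial_{44}f)$. Hence $D$ is scalar-flat iff $\mu\equiv 0$, which by~\eqref{toda} holds iff $\partial_{11}f+\partial_{22}f=0$ and $\partial_{33}f+\partial_{44}f=0$ both hold, i.e.\ iff $f$ is bi-harmonic. The only genuinely delicate points are the elimination of the $1+3$ splitting and, above all, keeping straight that the leafwise connections are the Levi-Civita connections of the rescaled metrics $e^{-2f}g_1$ and $e^{2f}g_2$ rather than of the restrictions of $g$ (cf.\ Remark~\ref{obs}), together with the sign of the $2$-dimensional curvature transformation; once those are in place, the statement follows by direct substitution into~\eqref{ric}.
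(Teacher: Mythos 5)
Your proof is correct and follows essentially the same route as the paper: Theorem \ref{red} to get the conformal product presentation, formula \eqref{ric} together with $F\neq 0$ to force $n_1=n_2=2$, normalization to flat surface factors, and the two-dimensional conformal change of the Ricci tensor to reduce the Einstein-Weyl condition to \eqref{toda} and the scalar-flat condition to bi-harmonicity. The only cosmetic difference is that you phrase the leafwise Ricci computation via Gauss/scalar curvature instead of citing Besse 1.159d, which amounts to the same calculation.
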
 

\begin{proof}
By Theorem \ref{red}, $(M,c,D)$ is reducible if and only if $(M,c)$ is
a conformal product $M=M_1^{n_1}\times 
M_2^{n_2}$, $c=[g_1+e ^{2f}g_2]$, and $D$ is the adapted Weyl structure. 

The first remark is that $M_1$ and $M_2$ must have the same
dimension. Indeed, since the Faraday form is non-zero, Equation
\eqref{ric} shows that $(M,c,D)$ is Einstein-Weyl if and only if
$n_1=n_2=2$ and 
\beq\label{we4}\Ric^1+\Ric^2= \f(g_1+e ^{2f}g_2)\eeq 
for some function
$\f:M\to\R$, where we remind that $\Ric^i$ is the Ricci tensor of the
metric $e ^{2\e(i)f}g_i$. Every 2-dimensional metric being locally
conformal to the flat metric, one can assume that $g_1$ and $g_2$ are
flat. Using the basic formulas for the conformal change of the Ricci
tensor (\cite{bes}, 1.159d), \eqref{we4} becomes
\beq\label{sr}-(\D_1f)g_1+(\D_2f)g_2=\f(g_1+e ^{2f}g_2),\eeq
where $\D_i$ denote the partial Laplacians on $\R^4=\R^2\times \R^2$:
$$\D_1f:=\partial_{11}f+\partial_{22}f,\qquad
\D_2f:=\partial_{33}f+\partial_{44}f.$$
Equation \eqref{sr} is clearly equivalent to \eqref{toda}. Moreover,
the trace of 
$\Ric^D$ with respect to the metric $g_1+e ^{2f}g_2$ is $-2\D_1f+2e
^{-2f}\D_2 f$. This shows that $D$ is Einstein-Weyl and scalar-flat if
and only if $\D_1f=\D_2f=0$.

\r

\obs If $f:U\times V\ra \R$, $U,V\subset\C$ is {\em bi-harmonic},
{\em i.e.}, harmonic with respect to both variables $z\in U,\ w\in V$, then  
$$f=\mathrm{Re}(F)+\mathrm{Re}(\tilde G),$$
where $F,G:U\times V\ra\C$ are holomorphic, and $\tilde
G(z,w):=G(z,\bar w)$, $\forall (z,w)\in U\times V$. It turns out that
the case where one of $F,G$ is trivial can be characterized
geometrically: 
\eobs

\begin{epr}\label{bh}
A non-closed conformal product $(M,c,D)$ (where $M:=U_1\times U_2$,
$U_1,U_2\subset \R^2$, $c:=[g_1+e^{2f}g_2]$, and $g_i$ is the
Euclidean metric on $U_i$) is 
hyper-Hermitian if and only if $f$ is the real part of a holomorphic 
function in the complex variables $z:=x_1\pm ix_2\in U_1\subset \R^2$
and $w:=x_3\pm ix_4\in U_2$. 
\end{epr}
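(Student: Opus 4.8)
The plan is to reduce the hyper-Hermitian condition to the existence of a single extra parallel compatible complex structure, to parametrize it by one function, and to recognize the parallelism equation $D\omega=0$ as an overdetermined first-order system whose integrability condition is the stated holomorphicity.

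Fix coordinates $x^1,x^2$ on $U_1$ and $x^3,x^4$ on $U_2$, take $g:=g_1+e^{2f}g_2$ as ground metric, and let $I_0$ be the product complex structure $I_0\partial_1=\partial_2,\ I_0\partial_3=\partial_4$, with K\"ahler form $\omega_0=dx^1\wedge dx^2+e^{2f}dx^3\wedge dx^4$; let $I_0'$ be the product structure with opposite orientation on the second factor. Since $D$ preserves the splitting $H_1\oplus H_2$ and restricts on each factor to a Levi-Civita connection of a conformal multiple of a flat metric, both $I_0$ and $I_0'$ are $D$-parallel. Now $(M,c,D)$ is hyper-Hermitian precisely when there is a further $D$-parallel $c$-compatible complex structure with the same orientation as $I_0$ or as $I_0'$: two non-antipodal $D$-parallel compatible complex structures sharing an orientation span a whole parallel $2$-sphere of such structures (their mutual angle being $D$-parallel, hence locally constant), which is a $D$-parallel hypercomplex triple, and conversely any such triple contains $I_0$ or $I_0'$ in its $2$-sphere. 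The substitution $x^4\mapsto-x^4$ interchanges the two orientations (this is the $\pm$ ambiguity in $w$ in the statement), so one may assume a $D$-parallel compatible $J$ with $\omega_J$ self-dual and orthogonal to $\omega_0$; such an $\omega_J$ is necessarily
\begin{equation*}
\omega_J=e^{f}\bigl(\cos\psi\,(dx^1\wedge dx^3-dx^2\wedge dx^4)+\sin\psi\,(dx^1\wedge dx^4+dx^2\wedge dx^3)\bigr),
\end{equation*}
for a function $\psi$ on $M$ (the factor $e^{f}$ normalizes $|\omega_J|_g^2=2$).

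I then write out $D\omega_J=0$, which by \eqref{cpf} reads $\nabla^g_X\omega_J=\theta\wedge(X\i\omega_J)-X^\flat\wedge(\theta^\sharp\i\omega_J)$, where (using the explicit form of $D$ on a conformal product from Section~6) the Lee form is $\theta=-(\partial_1 f\,dx^1+\partial_2 f\,dx^2)$. Substituting the ansatz and sorting the self-dual components — the $\omega_0$-components cancel identically, as they must since $D\omega_0=0$ — each remaining equation collapses to one scalar relation giving a first derivative of $\psi$ in terms of first derivatives of $f$; altogether the system becomes $d\psi=\eta$ with $\eta:=-\partial_2 f\,dx^1+\partial_1 f\,dx^2+\partial_4 f\,dx^3-\partial_3 f\,dx^4$. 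Hence a $D$-parallel $J$ of the required type exists locally if and only if $d\eta=0$, which a short computation shows is equivalent to the four equations
\begin{equation*}
\partial_{11}f+\partial_{22}f=0,\quad \partial_{33}f+\partial_{44}f=0,\quad \partial_{13}f=\partial_{24}f,\quad \partial_{14}f=-\partial_{23}f.
\end{equation*}
(Once $d\psi=\eta$ holds, the full equation $D\omega_J=0$ holds and $J$ is automatically integrable, $D$ being torsion-free, as in the proof of Lemma \ref{lck}.) The first two equations are the bi-harmonicity of $f$, corresponding by Proposition \ref{ans} to the scalar-flat Einstein-Weyl condition; the last two are the extra relations singling out the ``$\tilde G$ trivial'' case in the Remark following Proposition \ref{ans}.

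It remains to identify this system. Put $h:=\partial_1 f+i\,\partial_2 f$, $z:=x^1+ix^2$, $w:=x^3+ix^4$: the first equation says $\partial_z h=0$ and the last two say $\partial_{\bar w}h=0$, i.e.\ $h$ is holomorphic in $\bar z$ and in $w$; since $f$ is real this forces $\partial_z f=\frac12\bar h$ to be holomorphic in $z$ and in $\bar w$, whence $f=\mathrm{Re}(F)$ with $F$ holomorphic in $z=x^1+ix^2$ and $\bar w=x^3-ix^4$. Conversely any such $f$ satisfies the four relations (harmonicity in each pair of variables, plus the mixed Cauchy--Riemann identities), so $d\psi=\eta$ is solvable, $\omega_J$ and $J$ are defined, and $(I_0,J,I_0J)$ is a $D$-parallel hypercomplex structure compatible with $c$. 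Running the other orientation in the same way gives the complex-conjugate possibility, $F$ holomorphic in $z$ and $w$; together these are exactly the assertion that $f$ is the real part of a holomorphic function in $z=x^1\pm ix^2$ and $w=x^3\pm ix^4$. The substantial work is the component-by-component reduction of $D\omega_J=0$ to $d\psi=\eta$ — where one must be careful with the exact Lee form and with the many signs in the wedge and interior-product manipulations — together with the orientation bookkeeping reducing a general hyper-Hermitian structure to the two model cases; the concluding Cauchy--Riemann argument is then immediate.
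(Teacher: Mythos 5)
Your proof is correct in substance but follows a genuinely different route from the paper's. The paper proves the converse by taking a second \emph{integrable} Hermitian structure $J$ anti-commuting with $I=I_1+I_2$, writing $JX_1=aX_3+bX_4$ with $e^{2f}(a^2+b^2)=1$, and extracting from the vanishing of the Nijenhuis tensor $N^J(X_1,X_2)$ that $H=a+ib$ is holomorphic in $x_1-ix_2$ and $x_3+ix_4$, whence $f=-\ln|H|=\mathrm{Re}(-\ln H)$; the forward direction is delegated to a citation of \cite{bsmf}. You instead exploit the \emph{parallelism} equation: since $D$ is metric on the weightless self-dual forms and fixes $\omega_0$, it acts on $\langle\omega_0\rangle^\perp\subset\Lambda^+_0$ as an $\mathrm{SO}(2)$-connection, so $D\omega_J=0$ does collapse to a single equation $d\psi=\eta$, and I have checked that your $\eta$ is the right one (the contracted equation $d\omega_J=-2\theta\wedge\omega_J$ already determines all four components of $d\psi$, and in dimension $4$ wedging with the nondegenerate $2$-form $-\sin\psi\,A+\cos\psi\,B$ is injective on $1$-forms, so the contracted equation is equivalent to the full one); your four equations for $d\eta=0$ and their identification with pluriharmonicity of $f$ in $(x_1+ix_2,\,x_3-ix_4)$ are also correct, and this class coincides (after conjugation) with the paper's. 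What your approach buys is a self-contained proof of both directions, an explicit construction of the extra complex structure via the angle $\psi$, and a transparent split of the condition into bi-harmonicity (the scalar-flat condition of Proposition \ref{ans}) plus the extra Cauchy--Riemann relations. A small aside: your Lee form $\theta=-(\partial_1f\,dx^1+\partial_2f\,dx^2)$ is the correct one for the convention \eqref{cg1}, even though a literal reading of the bullet formula in Section 6 would give twice that.

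The one step you should make explicit is the reduction of ``hyper-Hermitian'' to ``there exists a further $D$-parallel compatible $J$''. An a priori hyper-Hermitian structure is only assumed integrable, not $D$-parallel; the bridge is that its $2$-sphere is the whole unit sphere of $\Lambda^\pm$ for one orientation, hence contains $I_0$ or $I_0'$, and then the unique Weyl structure rendering that structure parallel (its Obata--Boyer connection) must coincide with $D$ by the uniqueness Lemma \ref{tens1} applied to $\omega_0$ (or $\omega_0'$); only then are all members of the triple $D$-parallel and your parametrization by $\psi$ applies. As written, your sentence asserting this equivalence justifies only the easy direction. This is a fillable gap rather than an error, and the paper is itself somewhat terse at the corresponding point, but your argument genuinely needs it whereas the paper's Nijenhuis-tensor computation does not.
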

\begin{proof}
If $f$ is the real part of a holomorphic
function $h(z,w)$, then the metric reads $g=g_1+|e ^h|^2g_2$ and is
thus hyper-Hermitian by \cite{bsmf}, \S7. 

Conversely, assume that $(M,c:=[g_1+e^{2f}g_2],D)$ is
hyper-Hermitian. If $I_1$, $I_2$ denote the $D$-parallel weightless
volume forms 
of the factors, viewed as skew-symmetric endomorphisms on $M$, then
$\pm I_1,\pm I_2$ are $D$-parallel Hermitian structures on $M$. Up to
a choice of signs of $I_1,I_2$ (choice that determines the signs in
the definition of the complex variables $z,w$), one might assume that
$I:=I_1+I_2$ belongs to 
the hyper-Hermitian structure, {\em i.e.}, for any unitary vectors $Y_i\in
V_i$, the orthonormal frame $Y_1,I_1Y_1,Y_2,I_2Y_2$ is positively
oriented.  

Let $J$ be another integrable
Hermitian structure anti-commuting with $I$. We denote by
$X_i:=\partial/\partial x_i$ for $1\le i\le 4$, so $IX_1=X_2$ and
$IX_3=X_4$. Then one can write $JX_1=aX_3+bX_4$ for some functions
$a,b:M\to \R$. Since $J$ is Hermitian we must have $e ^{2f}(a ^2+b^2)=1$
and the anti-commutation with $I$ yields $JX_2=bX_3-aX_4$. A
straightforward computation using the vanishing of the Nijenhuis
tensor of $J$ yields 
\bea 0=N^J(X_1,X_2)&=&(\partial_1b-\partial_2a)J(X_3)-
(\partial_1a+\partial_2b)J(X_4) \\&&
+(\partial_4a+\partial_3b)(aX_3-bX_4)-
(\partial_3a-\partial_4b)(bX_3+aX_4).\eea
This is of course equivalent to the fact that $H:=a+ib$ is holomorphic in
the complex variables $z:=x_1- ix_2$ and $w:=x_3+ix_4$, so $f=-\ln
|H|=Re(-\ln H)$ is the real part of a holomorphic function.

\r

This proposition has a few consequences: 
\bi
\item The examples of 4-dimensional
hyper-Hermitian manifolds constructed by Joyce (cf. \cite{j}, or
\cite{bsmf}, \S7) are actually conformal products.
\item In contrast with the Riemannian case, where a Ricci-flat
  K\"ahler surface is automatically hyper-K\"ahler, there exist
  examples of scalar-flat Einstein-Weyl Hermitian manifolds 
  in dimension 4 which are not hyper-Hermitian. Indeed, it suffices to
  choose $f$ bi-harmonic, but not the real part of a holomorphic
  function in any of the variables $z:=x_1\pm ix_2$ and $w:=x_3\pm
  ix_4$, {\em e.g.} $f(z,w)=Re(z)Re(w)=x_1x_3$. On the other hand the
  complex structure on $M$  
  defined by the complex structures of the factors is clearly $D$-parallel.
\item The hyper-Hermitian non-closed conformal products are exactly
  the non-closed conformal {\em multi-products}, {\em i.e.}, whose adapted
  Weyl structure $D$ leaves more than one pair of complementary
  distributions. The restricted holonomy of these special structures
  is $\C^*\subset CO(4)$, as the following proposition states: 
\ei
\begin{prop}\label{multp}
A $4$-dimensional non-closed conformal product $(M,c,D)$ that admits
multiple $D$-parallel splittings is hyper-Hermitian, and is locally
isomorphic to one of the examples described in Proposition
\ref{bh}. The holonomy of $D$ is then $\C^*$, acting by scalar
multiplication on $\C^2\simeq TM$. 
\end{prop}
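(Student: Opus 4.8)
The plan is to pin down the reduced holonomy Lie algebra $\hol(D)\subseteq\mathfrak{co}(4)=\R\,\Id\oplus\so(4)$, using the decomposition $\so(4)=\mathfrak{su}(2)_+\oplus\mathfrak{su}(2)_-$ — equivalently $\L^2=\L^2_+\oplus\L^2_-$ for a local orientation, with $\mathfrak{su}(2)_\pm$ acting trivially on $\L^2_\mp$. Since the Hodge operator is $D$-parallel, the space of $D$-parallel weightless $2$-forms splits as $V^+\oplus V^-$, with $V^\pm\subseteq\L^2_\pm\cong\R^3$ a subspace invariant under $\hol(D)$, whose action factors through the $\mathfrak{su}(2)_\mp$-component (the summand $\R\,\Id$ acts trivially on weightless forms). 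I will also use that $D$ is non-closed exactly when its Faraday form $F$ does not vanish identically, i.e.\ exactly when the projection of $\hol(D)$ onto $\R\,\Id$ is onto.

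First I would show that every $D$-parallel complementary splitting is of type $2{+}2$. If some splitting has a $1$-dimensional factor $\ell$, then $\hol(D)$ acts trivially on the line bundle $\ell\otimes L^{-1}$ (the conformal weight cancels the scaling of $\ell$), so this bundle is $D$-flat and carries locally a $D$-parallel weightless vector $v$; since $\mathfrak{co}(4)$ acts on $TM\otimes L^{-1}$ through its projection to $\so(4)$, the semisimple part $\mathfrak{k}:=\pi_{\so(4)}(\hol(D))$ lies in $\so(3)_v$, all of whose elements are skew and annihilate $v$, hence have rank $\le2$. Given a second, distinct, $D$-parallel splitting $TM=H_1\oplus H_2$, intersecting $\so(3)_v$ with the block subalgebra $\so(\dim H_1)_{H_1}\oplus\so(\dim H_2)_{H_2}$ and decomposing $v=v_1+v_2$, a short case check — using that the $1$-dimensional part of the second splitting cannot be $\ell$ without the two splittings coinciding — gives $\mathfrak{k}\subseteq\R\,\o_0$ with $\o_0$ of rank $\le2$. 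Then $R^D$ takes values in $\R\,\Id\oplus\R\,\o_0$, and the first Bianchi identity $\sum_{\mathrm{cyc}}R^D_{X,Y}Z=0$ forces the $\R\,\Id$-component of $R^D$ — which equals $F$, as $\o_0$ is trace-free — to vanish, contradicting non-closedness. Hence the (at least two) $D$-parallel splittings are all $2{+}2$.

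Each $2{+}2$ splitting $TM=H_1\oplus H_2$ supplies the $D$-parallel weightless $2$-form $\o_{H_1}$ (the weightless volume form of a $D$-parallel factor, as recalled in Section~5), whose self-dual and anti-self-dual parts are $\frac12(\o_{H_1}\pm\o_{H_2})$, both non-zero, hence non-zero elements of $V^+$ and of $V^-$; moreover the (unordered, up-to-sign) pair $\{\o_{H_1},\o_{H_2}\}$ is recovered from $\{\o_{H_1}\pm\o_{H_2}\}$, so two distinct $2{+}2$ splittings yield non-proportional vectors in $V^+$ or in $V^-$, and after possibly reversing the orientation of $M$ we may assume $\dim V^+\ge2$. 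A $\hol(D)$-invariant subspace of $\L^2_+\cong\R^3$ of dimension $\ge2$ must be all of $\L^2_+$: a $2$-dimensional one would have a $\hol(D)$-fixed orthogonal line, hence a $D$-parallel self-dual weightless $2$-form in $V^+$ orthogonal to $V^+$ — absurd. So $V^+=\L^2_+$, the $\mathfrak{su}(2)_+$-component of $\hol(D)$ vanishes, and $\hol(D)\subseteq\R\,\Id\oplus\mathfrak{su}(2)_-=\mathfrak{gl}(1,\H)$; in particular every self-dual complex structure compatible with $c$ is $D$-parallel, and fixing a quaternionic triple $(I,J,K)$ among them, which are integrable by Lemma~\ref{lck}, exhibits $(M,c,D,I,J,K)$ as hyper-Hermitian. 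Now $\mathfrak{s}:=\pi_{\mathfrak{su}(2)_-}(\hol(D))$ is a subalgebra of $\mathfrak{su}(2)$: it is not all of $\mathfrak{su}(2)_-$ (that would force $V^-=0$, impossible given a $2{+}2$ splitting), not $0$ (Bianchi as above would give $F=0$), and $\mathfrak{su}(2)$ has no $2$-dimensional subalgebra, so $\mathfrak{s}=\R\,\xi$ with $\xi$ a non-zero anti-self-dual $2$-form, hence a multiple of an anti-self-dual complex structure $J_0$. Therefore $\hol(D)\subseteq\R\,\Id\oplus\R\,J_0$ and surjects onto both summands; the diagonal line $\R(\Id+cJ_0)$ is excluded because $\Id+cJ_0$ is invertible, so Bianchi would give $R^D=0$; hence $\hol(D)=\R\,\Id\oplus\R\,J_0$, which exponentiates to the reduced holonomy group $\C^*$ acting on $TM\cong\C^2$ (with complex structure $J_0$) by complex scalar multiplication.

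Finally, being a conformal product of two surfaces, $(M,c)$ is locally $(U_1\times U_2,[g_1+e^{2f}g_2])$ with $U_i\subseteq\R^2$, $g_i$ Euclidean, $f$ a function, and $D$ the adapted Weyl structure; since we have shown it hyper-Hermitian, Proposition~\ref{bh} gives that $f$ is the real part of a holomorphic function of $z=x_1\pm ix_2$ and $w=x_3\pm ix_4$, so $(M,c,D)$ is one of the examples described there. The crux — and the main obstacle — is the joint use of reducibility and non-closedness through the first Bianchi identity: this is what pins the semisimple part of the holonomy down to a single line, and moreover to a line spanned by a genuine (rank $4$) complex structure rather than a decomposable form; the remainder is $\so(4)$-representation theory together with the appeal to Proposition~\ref{bh}.
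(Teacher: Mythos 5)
Your proof is correct, and it reaches the conclusion by a genuinely different route from the paper's in each of its three stages. For the preliminary fact that all $D$-parallel splittings must be of type $2{+}2$, the paper disposes of this with a one-line appeal to Theorem \ref{main}, whereas you give a self-contained argument (a $1$-dimensional factor traps $\pi_{\so(4)}(\hol(D))$ in a line spanned by a rank-$\le 2$ element, and the first Bianchi identity then forces $F=0$); your version is more detailed and fills in a step the paper leaves terse. For the hyper-Hermitian structure, the paper exhibits a third parallel $2$-form orthogonal to $I_1$ and $I_2$ and assembles two anti-commuting self-dual parallel Hermitian structures by hand, while you argue through $\so(4)=\mathfrak{su}(2)_+\oplus\mathfrak{su}(2)_-$: two distinct $2{+}2$ splittings force the space of parallel self-dual weightless $2$-forms to be at least $2$-dimensional, hence all of the rank-three bundle of self-dual weightless $2$-forms, killing the $\mathfrak{su}(2)_+$-part of the holonomy. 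The largest divergence is the computation of $\Hol_0(D)=\C^*$: the paper works in the explicit local model of Proposition \ref{bh}, uses bi-harmonicity of $f$ to make the rescaled factor metrics flat so that $R^D$ annihilates $I_i^\sharp$, and reads off from \eqref{asd} that the image of the curvature operator is exactly $2$-dimensional where $F\ne 0$; you instead pin down $\hol(D)$ abstractly, noting that its $\mathfrak{su}(2)_-$-projection is a proper nonzero subalgebra of $\mathfrak{su}(2)$, hence a line $\R J_0$, and using the first Bianchi identity to exclude both $\hol(D)\subseteq\R\,\Id$ and the diagonal line $\R(\Id+cJ_0)$. Your route buys independence from coordinates — Proposition \ref{bh} is invoked only for the ``local model'' clause of the statement, not for the holonomy — at the price of losing the explicit by-products of the paper's computation, notably the anti-self-duality of the Faraday form and the concrete form of the curvature operator.
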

\begin{proof} A parallel splitting corresponds to a pair of
  $D$-parallel decomposable weightless forms. If two such splittings
  exist, the factors $M_1,M_2$ of $M$ need to have the same dimension
  (Theorem \ref{main}). Denoting, as before, by $I_1$ and $I_2$ the
  weightless volume forms of the 2-dimensional factors $M_1$
  and $M_2$ respectively, we have a 2-dimensional space of $D$-parallel
  weightless forms, but the only splitting that they define is the
  original one. Therefore, there must be another $D$-parallel 2-form
  $\omega$, orthogonal to both $I_1$ and $I_2$. 

Viewing $\o$ as a skew-symmetric endomorphism, we can distinguish two
cases: Either its square has only one eigenvalue, thus $\o$ defines a
$D$-parallel complex structure $J$, orthogonal to $I_1$ and $I_2$, or
$\o$ defines itself an orthogonal splitting, in which case, the sum of
the two weightless volume forms of those factors defines a
$D$-parallel complex structure $J$ as well. 

By an appropriate change of signs of the weightless volume forms
$I_1,I_2$, we can assume that $I:=I_1+ I_2$ defines the same
orientation as $J$. With respect to this orientation of $M$, the
2-forms $I$ and $J$ are {\em self-dual} and orthogonal, thus they
anti-commute as endomorphisms and define therefore a $D$-parallel
hyper-Hermitian structure on $M$. Proposition \ref{bh} applies and we
have a local model. We shall see below that the Faraday form $F$ is,
in this case, {\em anti-self-dual}. 

For the second claim, we first note that the holonomy representation
of $D$ preserves all self-dual forms and at least one anti-self-dual
2-form, namely $\tilde I:=I_1- I_2$, thus $\Hol_0(D)\subset \C^*$.  

In order to prove that the restricted holonomy of $D$ is exactly
$\C^*$, we will show that the Lie algebra of $\Hol_0(D)$ is
2-dimensional. 

First, denote as before by $X_i,Y_i$ arbitrary vector fields on $M_i$,
lifted to vector fields on $M$. We have 
$$R^D_{X_1,Y_1}X_2=0;\ R^D_{X_2,Y_2}X_1=0,$$
and, since $\Delta_if=0$, the metric $e^{\pm 2f}g_i$ is flat, therefore
$$R^D_{X_i,Y_i}Z_i=0,$$ 
hence 
$$R^D(I_i^\sharp)=0.$$
Here we view the curvature operator $R^D$ as being defined on
$\Lambda^2(TM)$ with values in $\End(TM)$. We will show that the image
of $R^D$ is 2-dimensional in every point where $F$ does not vanish. 

We have used the following notation: $\sharp:T^*M\ra TM$ and its
inverse $\flat:TM\ra T^*M$ are the canonical isomorphisms for a
particular choice of a metric in the conformal class (fixed once and
for all), traditionally called ``raising'', respectively, ``lowering''
of indices. The index $\sharp$, respectively $\flat$, attached {\em
  once} to a tensor, signifies the raising, respectively lowering of {\em
  one} index. For example, $F^\sharp$ is a skew-symmetric endomorphism
of $TM$, and $F^{\sharp\sharp}$ is a bi-vector, {\em i.e.} a section in
$\Lambda^2TM$. 

Straightforward computations show:
\beq\label{asd}
R^D_{X_1,X_2}Y=F(X_1,X_2)Y+(F\wedge
\Id)(X_1,X_2)(Y)=F(X_1,X_2)Y+[F^\sharp,X_1\wedge X_2^\flat](Y),\eeq 
where the bracket $[\cdot,\cdot]$ denotes the commutator in $\End(TM)$.

We infer that the skew-symmetric endomorphism $[F^\sharp,X_1\wedge
X_2^\flat]$ is always anti-self-dual (since it commutes with the basis
of the space of self-dual endomorphisms, defined by the $D$-parallel
hyper-Hermitian structure), which implies that $F$ itself is
anti-self-dual.  

Another consequence of (\ref{asd}) is that
$$R^D(\alpha)=\langle F,\alpha\rangle \Id + [F^\sharp,\alpha^\flat], $$
for any 2-vector $\alpha$ orthogonal on both $I_i^{\sharp\sharp}$,
$i=1,2$. In particular, in a point where $F\ne 0$, 
$$R^D(F)=\|F\|^2 \Id\ \mbox{and}\ R^D(\beta)=[F^\sharp,\beta^\flat]$$
where  $\tilde I^\flat,F,\beta^{\flat\flat}$ is an orthogonal basis of
$\Lambda^-M$. 

Thus $R^D(F)$ and $R^D(\beta)$ are linearly independent at any point where
$F\ne 0$, since the first is a
multiple of the identity and the second is a skew-symmetric
endomorphism. That means that the image through the curvature operator
of $\Lambda^2(TM)$ is 2-dimensional on an open set, hence the holonomy
algebra is 2-dimensional, more precisely $\Hol_0(D)=\C^*$ as claimed. 

\r


\end{document}